\numberwithin{equation}{section}
\theoremstyle{definition}
\newtheorem{theorem}{Theorem}[section]
\newtheorem{corollary}[theorem]{Corollary} 
\newtheorem{definition}[theorem]{Definition} 
\newtheorem{example}[theorem]{Example}
\newtheorem{lemma}[theorem]{Lemma}
\newtheorem{proposition}[theorem]{Proposition}
\newtheorem{thmintro}{Theorem}
\newtheorem*{theorem*}{Theorem}
\DeclareMathOperator\Aut{Aut}
\DeclareMathOperator\Ext{Ext}
\DeclareMathOperator\gr{gr}
\DeclareMathOperator\id{id}
\DeclareMathOperator\soc{soc}
\DeclareMathOperator\tot{tot}
\newcommand\inv{^{-1}}
\newcommand\iso{\cong}
\newcommand\kk{\Bbbk}
\newcommand\cH{\mathcal H}
\newcommand\NN{\mathbb N}
\newcommand\ZZ{\mathbb Z}
\newcommand\fs{\mathfrak s}
\newcommand\ft{\mathfrak t}
\begin{document}

\title{Quiver down-up algebras of type A}

\author[Gaddis]{Jason Gaddis}
\author[Keeler]{Dennis Keeler}
\address{Miami University, Department of Mathematics, Oxford, Ohio 45056} 
\email{gaddisj@miamioh.edu,keelerds@miamioh.edu}

\subjclass{
16E65,   	
16S38,  	
16P90,   	
16W50,   	
16P40   	
}
\keywords{Down-up algebras, twisted-graded Calabi--Yau, piecewise domain, generalized Weyl algebra, skew group ring, isomorphism problems}

\begin{abstract}
We present a generalization of down-up algebras, originally defined by Benkart and Roby. These quiver down-up algebras arise as quotients of the double of the extended Dynkin quiver of type A. Under a certain non-degeneracy condition, we show that quiver down-up algebras are noetherian piecewise domains, and that they are twisted Calabi--Yau. Finally, we consider the isomorphism problem for graded quiver down-up algebras.
\end{abstract}

\maketitle

Throughout, let $\kk$ be an algebraically closed field of characteristic zero. Down-up algebras were introduced by Benkart and Roby \cite{BRdu,BRduADD}. They have drawn significant interest in representation theory \cite{kulk}, invariant theory \cite{KKZ6}, and the study of Hopf algebras \cite{BWhopf}. In this paper, we realize a family of algebras which may reasonably be considered as a generalization of down-up algebras to quivers with many vertices. 
 
\begin{definition}\label{defn.qdu}
Let $Q$ be the following quiver: 
\begin{equation}\label{eq.dblA}
\begin{tikzcd}
    &         &   	& e_0 \arrow[llld, "u_0"] \arrow[rrrd, "d_{n-1}", shift left=2] 	&           &		&          \\
e_1 \arrow[rr, "u_1"] \arrow[rrru, "d_0", shift left=2] 	&        	&                   	e_2 \arrow[r, "u_2"]	\arrow[ll, "d_1", shift left=2]			 		& \cdots   \arrow[r, "u_{n-3}"]  \arrow[l, "d_2", shift left=2] &	e_{n-2} \arrow[rr, "u_{n-2}"]	\arrow[l, "d_{n-3}", shift left=2] &           	& e_{n-1} \arrow[lllu, "u_{n-1}"] \arrow[ll, "d_{n-2}", shift left=2]
\end{tikzcd}
\end{equation}
For $\alpha,\beta,\gamma \in \kk^n$, the corresponding \emph{quiver down-up algebra of type A} is defined as the quotient of $\kk Q$ by the relations
\begin{align*}
	d_{i-1}u_{i-1}u_i &= \alpha_i u_id_iu_i + \beta_i u_iu_{i+1}d_{i+1} +\gamma_i u_i \\
	d_id_{i-1}u_{i-1} &= \alpha_i d_iu_id_i + \beta_i u_{i+1}d_{i+1}d_i +\gamma_i d_i,
\end{align*}
for $i\in Q_0$. We denote this algebra by $\cH(\alpha,\beta,\gamma)$. If $\gamma_i=0$ for all $i \in Q_0$, then we write $\cH(\alpha,\beta,0)$.
\end{definition}

The classical down-up algebras occur when $n=1$, so there is a single vertex and two loops ($u=u_0$, $d=d_0$) with parameters $\alpha=\alpha_0$, $\beta=\beta_0$, and $\gamma=\gamma_0$.
These algebras are known to be noetherian domains if and only if $\beta\neq 0$. When $\beta \neq 0$ and $\gamma=0$, they are cubic Artin--Schelter regular algebras of global dimension three. We generalize these results to the quiver down-up algebras from Definition \ref{defn.qdu}. 

In Section \ref{sec.basis} we provide background on quivers and path algebras with relations, and establish a $\kk$-basis for the quiver down-up algebras. In Section \ref{sec.gwa}, we show that quiver down-up algebras appear as generalized Weyl algebras over the algebra $\oplus_{i \in Q_0} R[x_i,y_i]$. This shows that the $H(\alpha,\beta,\gamma)$ are noetherian and piecewise domains when $\beta_i\neq 0$ for all $i \in Q_0$. Section \ref{sec.skgrp} realizes certain quiver down-up algebras as skew group rings of an Artin--Schelter regular algebra and provides the last pieces of proving the following theorem.

\begin{thmintro}[Theorem \ref{thm.CY}]
Suppose $\beta_i \neq 0$ for all $i \in Q_0$. Then $\cH=\cH(\alpha,\beta,0)$ is twisted-graded Calabi--Yau of global dimension three with Nakayama automorphism $\mu^{\cH}$ given by $\mu^{\cH}(u_i)=-\beta_{i-1}\inv u_i$ and $\mu^{\cH}(d_i)=-\beta_{i-1}\inv d_i$ for all $i \in Q_0$.
\end{thmintro}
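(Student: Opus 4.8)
The plan is to establish that $\cH = \cH(\alpha,\beta,0)$ is twisted Calabi--Yau by exploiting the two structural descriptions promised earlier in the paper. Recall that twisted-graded Calabi--Yau of dimension three means $\cH$ has a finite-length projective bimodule resolution by finitely generated projectives, and that $\Ext^i_{\cH^e}(\cH,\cH^e)$ vanishes for $i\neq 3$ and equals ${}^1\cH^{\mu}$ (a twist of the bimodule $\cH$ by an automorphism $\mu$) in degree three. Since the $\gamma=0$ case is $\NN$-graded and, by Section~\ref{sec.gwa}, a generalized Weyl algebra over $R=\oplus_{i\in Q_0}\kk[x_i,y_i]$ under the hypothesis $\beta_i\neq 0$, I would first record that $\cH$ is noetherian with good homological finiteness properties, so that the abstract machinery of twisted Calabi--Yau applies.

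\smallskip

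First I would reduce to computing the Nakayama automorphism via the skew-group-ring description from Section~\ref{sec.skgrp}. The idea is that a suitable $\cH(\alpha,\beta,0)$ is realized as a skew group ring $A \# \kk G$ (or a Morita-type/matrix version thereof) where $A$ is a cubic Artin--Schelter regular algebra of global dimension three and $G$ a finite group acting on $A$. Skew group rings of twisted Calabi--Yau algebras by finite groups (in characteristic zero, so that $|G|$ is invertible) are again twisted Calabi--Yau of the same global dimension; this is a standard transfer result, and it immediately yields global dimension three and the twisted Calabi--Yau property for $\cH$. The Nakayama automorphism of the skew group ring is computed from that of $A$ together with the action, and for a cubic AS-regular algebra of dimension three the Nakayama automorphism is governed by the homological determinant / the defining relations.

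\smallskip

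Next I would pin down the explicit formula $\mu^{\cH}(u_i)=-\beta_{i-1}\inv u_i$ and $\mu^{\cH}(d_i)=-\beta_{i-1}\inv d_i$. The cleanest route is to construct the bimodule resolution directly and read off the Nakayama automorphism from the top term. Because $\cH$ is defined by the two families of cubic relations in Definition~\ref{defn.qdu}, I expect the minimal resolution to have the shape
\begin{equation*}
0 \to \cH\otimes_{\cH_0} V^{*} \otimes_{\cH_0}\cH \to \cH\otimes_{\cH_0} W \otimes_{\cH_0}\cH \to \cH\otimes_{\cH_0} V\otimes_{\cH_0}\cH \to \cH\otimes_{\cH_0}\cH_0\otimes_{\cH_0}\cH \to \cH \to 0,
\end{equation*}
where $\cH_0=\oplus_i\kk e_i$ is the (semisimple) degree-zero part, $V=\Span\{u_i,d_i\}$ is the arrow space, and $W$ is the span of the cubic relations. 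The Nakayama automorphism $\mu^{\cH}$ is then obtained by comparing the leftmost map of this resolution with the rightmost one after applying $\Hom_{\cH^e}(-,\cH^e)$; concretely, $\mu$ is the twist that identifies the dual complex with the original resolution shifted to degree three. The scalar $-\beta_{i-1}\inv$ should emerge from the coefficient of the ``principal'' term in the cubic relations when one transposes the matrix of the syzygy map, and the index shift $i\mapsto i-1$ is forced by following the $u$- and $d$-arrows around the cyclic quiver \eqref{eq.dblA}.

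\smallskip

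The main obstacle I anticipate is the bookkeeping of the multi-vertex, non-connected-by-loops structure: unlike the classical ($n=1$) down-up algebra where everything lives over a field, here $\cH_0$ is semisimple of rank $n$ and all tensor products and duals must be taken over $\cH_0$, with careful tracking of which idempotents $e_i$ bound each arrow and relation. Verifying that the proposed complex is exact (minimality, and that the composite of consecutive maps is zero) amounts to a genuine computation with the cubic relations, and the vertex-indexing means the ``transpose'' producing $\mu$ permutes the idempotent blocks according to the quiver's cyclic symmetry. I would handle this by treating $\cH$ as a generalized Weyl algebra (Section~\ref{sec.gwa}) to compute Hochschild cohomology block-by-block, or alternatively by transferring the Nakayama automorphism through the skew-group-ring isomorphism and then translating the generators of $A$ back into the $u_i,d_i$; in either approach the decisive step is correctly matching the $\beta$-coefficients to the arrow they twist, which is exactly where the $-\beta_{i-1}\inv$ factor is determined.
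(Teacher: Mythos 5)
Your outline contains the right raw materials, but there are two genuine gaps. First, the skew-group-ring reduction does not cover the general case. The only member of the family realized in Section \ref{sec.skgrp} as a corner $f(R\#\kk G)f$ of a skew group ring is $\cH(0,(1,\dots,1),0)$ (Lemma \ref{lem.skgrp}); a $\ZZ_n$-action on a single-vertex cubic Artin--Schelter regular algebra can only produce constant parameters at every vertex, and the scaling isomorphisms of Lemma \ref{lem.iso} cannot normalize an arbitrary $(\alpha,\beta)\in\kk^n\times(\kk^\times)^n$ to constants (there are too few scalars $\lambda_i$, and scaling cannot change which $\alpha_i$ vanish). So ``skew group rings of twisted CY algebras are twisted CY'' yields the theorem only at one point of the parameter space. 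The paper uses the skew group ring solely to compute the matrix-valued Hilbert series of that one member, then transfers the Hilbert series to all $(\alpha,\beta)$ because the Diamond Lemma basis of Proposition \ref{prop.basis} is parameter-independent (Lemma \ref{lem.hilb}); the twisted Calabi--Yau property for general parameters then follows from the recognition criterion of \cite[Lemma 8.6]{RR1}, whose hypotheses are the Hilbert series $(I-Mt+M^Tt^3-It^4)\inv$ together with triviality of the graded left and right socles --- and the latter is exactly where the piecewise-domain result (Corollary \ref{cor.pwd}) enters. Your proposal omits both of these bridging steps.

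Second, your fallback --- writing down the length-three bimodule resolution over $\cH_0$ and dualizing --- is the honest direct route, but you leave its crux (``verifying that the proposed complex is exact'') unaddressed; that exactness is essentially equivalent to the theorem and is not routine bookkeeping. The paper sidesteps it entirely via the superpotential formalism: Lemma \ref{lem.CY} exhibits $\cH$ as the derivation-quotient algebra of an $\eta$-twisted superpotential of degree $4$ with $\eta(u_i)=\beta_{i-1}u_i$ and $\eta(d_i)=\beta_{i-1}d_i$, and once the twisted Calabi--Yau property of dimension three is known, \cite[Theorem 6.8]{BSW} gives $\mu^{\cH}=(-1)^{d+1}\eta\inv$ with $d=4$, which is precisely the stated $-\beta_{i-1}\inv$ scalar. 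Your heuristic for where $-\beta_{i-1}\inv$ comes from is consistent with this, but as written the proposal neither establishes exactness of the resolution nor supplies an alternative certification of the twisted Calabi--Yau property for general $(\alpha,\beta)$.
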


A direct consequence of this theorem is that $\cH(\alpha,\beta,\gamma)$ is twisted Calabi--Yau whenever $\beta_i\neq 0$ for all $i$. See Corollary \ref{cor.CY}. The following theorem, which is analogous to \cite[Theorem]{KMP} in the single-vertex case, is proved in Section \ref{sec.main}.

\begin{thmintro}[Theorem \ref{thm.main}]
\label{thm.introA}
Let $\alpha,\beta,\gamma \in \kk^n$ and let $\cH=\cH(\alpha,\beta,\gamma)$. The following are equivalent.
\begin{enumerate}
	\item $\beta_i \neq 0$ for all $i \in Q_0$,
	\item $\cH$ is right (or left) noetherian,
	\item $\kk[u_id_i,d_{i-1}u_{i-1}]$ is a polynomial ring in two variables for all $i \in Q_0$,
	\item $\cH$ is a piecewise domain.
\end{enumerate}
\end{thmintro}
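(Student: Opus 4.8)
\section*{Proof plan}

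The plan is to treat condition (1) as the master hypothesis and show that each of (2), (3), (4) is equivalent to it. The forward implications are essentially in hand from the earlier sections: when $\beta_i\neq 0$ for all $i\in Q_0$, Section \ref{sec.gwa} realizes $\cH$ as a generalized Weyl algebra over a noetherian base, which yields both the noetherian property (2) and the piecewise domain property (4); the same realization identifies $\kk[u_id_i,d_{i-1}u_{i-1}]$ with the degree-zero (base) component of that generalized Weyl algebra, which is a polynomial ring in two variables, and this is (3). So the substance of the theorem is the three converses, all of which I would prove in contrapositive form: if $\beta_j=0$ for some $j\in Q_0$, then (2), (3), and (4) each fail. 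Throughout I use the $\kk$-basis of Section \ref{sec.basis} to certify that various elements are nonzero, and I note that reversing all arrows in \eqref{eq.dblA} gives an anti-isomorphism interchanging the two defining relations, so that the right and left noetherian statements are equivalent and it suffices to treat one side.

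For (3) and (4) I would extract an explicit zero divisor. Fix $j$ with $\beta_j=0$ and set $a=u_jd_j$ and $b=d_{j-1}u_{j-1}$, both loops at $e_j$. Applying the second defining relation at $i=j$ (with $\beta_j=0$) to the inner word $d_jd_{j-1}u_{j-1}$ gives
\[
ab = u_j\bigl(d_jd_{j-1}u_{j-1}\bigr) = u_j\bigl(\alpha_j d_ju_jd_j + \gamma_j d_j\bigr) = \alpha_j a^2 + \gamma_j a,
\]
so that, writing $z_j = b-\alpha_j a-\gamma_j e_j\in e_j\cH e_j$, we obtain $az_j=0$. By the basis of Section \ref{sec.basis} the elements $u_jd_j$, $d_{j-1}u_{j-1}$, $e_j$ are linearly independent, hence $a\neq 0$ and $z_j\neq 0$. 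As $a$ and $z_j$ both lie in the corner $e_j\cH e_j$, this shows $\cH$ is not a piecewise domain, so (4) fails; and since $z_j\in\kk[a,b]$, the subalgebra $\kk[u_jd_j,d_{j-1}u_{j-1}]$ contains a nonzero zero divisor and hence is not a polynomial ring in two variables, so (3) fails. This establishes (3) $\Rightarrow$ (1) and (4) $\Rightarrow$ (1).

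For (2) I would adapt the non-noetherian argument of \cite{KMP}. The essential feature of the degeneracy $\beta_j=0$ is that the same element $z_j$ is annihilated on both sides: the first relation at $i=j$ gives $z_ju_j=0$ and the second gives $d_jz_j=0$, while $z_j\neq 0$; crucially this holds for arbitrary $\gamma$, so no separate reduction to the $\gamma=0$ case is needed. Because $z_ju_j=0$, right multiplication by ``up'' arrows kills $z_j$, whereas left multiplication does not; I would therefore form the right ideals
\[
R_m=\sum_{k=0}^{m} w_k\,\cH,\qquad w_k=u_{j-k}u_{j-k+1}\cdots u_{j-1}\,z_j \quad(\text{indices mod } n),
\]
with $w_0=z_j$. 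Since $w_{k+1}=u_{j-k-1}w_k$ is a \emph{left} multiple of $w_k$, it is not automatically a right multiple, so the chain $R_0\subseteq R_1\subseteq\cdots$ has a chance of being strictly increasing; once strictness is known, any finite generating set of $\bigcup_m R_m$ lies in some $R_m$, so the union is a right ideal that is not finitely generated and $\cH$ fails to be right noetherian, giving (2) $\Rightarrow$ (1).

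The hard part will be verifying the strict inclusions $R_m\subsetneq R_{m+1}$, i.e.\ that $w_{m+1}\notin R_m$. I would do this by passing to leading terms with respect to the path-length filtration, whose associated graded algebra is $\gr\cH=\cH(\alpha,\beta,0)$; there the PBW-type basis of Section \ref{sec.basis} should let one read off that the leading term of $w_{m+1}$ does not lie in the span of the right multiples of $w_0,\dots,w_m$. This is also where the genuinely new phenomenon beyond the single-vertex case of \cite{KMP} appears: because the quiver \eqref{eq.dblA} is a cycle, the ``powers of $u$'' become wrap-around words of period $n$, and the bookkeeping for which basis monomials occur must track vertex indices modulo $n$ (in particular $w_0$ and $w_n$ lie in the same corner $e_j\cH e_j$ and must be compared there). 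Managing this combinatorics so that the non-membership assertions hold uniformly in $m$ is, I expect, the crux of the entire proof.
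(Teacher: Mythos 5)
Your forward implications and your disproofs of (3) and (4) when some $\beta_j=0$ match the paper: the element $z_j=d_{j-1}u_{j-1}-\alpha_j u_jd_j-\gamma_j e_j$ is exactly the zero divisor the paper exhibits in \eqref{eq.bad1}--\eqref{eq.bad2}, and the generalized Weyl algebra realization of Section \ref{sec.gwa} handles $(1)\Rightarrow(2),(3),(4)$ just as you say. The genuine gap is in the noetherian direction: you set up the chain $R_0\subseteq R_1\subseteq\cdots$ but explicitly defer the strictness $R_m\subsetneq R_{m+1}$, and that deferred step is the entire content of the paper's Lemma \ref{lem.nnoeth}. Moreover, the method you propose for it --- pass to $\gr\cH$ and check that the leading term of $w_{m+1}$ does not lie in the right ideal generated by the leading terms of $w_0,\dots,w_m$ --- is not sufficient as stated, because $\gr(R_m)$ can be strictly larger than $\sum_k \overline{w_k}\,\gr(\cH)$: non-membership of $\overline{w_{m+1}}$ in the latter does not rule out $w_{m+1}\in R_m$.

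What the paper actually does is a direct support computation in the basis of Proposition \ref{prop.basis}. Writing $z=\alpha_i u_id_i+\gamma_i-x_{i-1}$ (your $-z_j$) and $U$ for the full cycle of up-arrows based at $e_i$, one first checks $Uzu_m=0$ for every $m$ (your identity $z_ju_j=0$ together with source--target mismatches), so that $U^mzA$ is spanned by the elements $U^mz\,x_{i-1}^{j}d_{i-1}\cdots d_{i-k}$; then the identity $d_ix_{i-1}^{j}=(\alpha_ix_i+\gamma_i)^{j}d_i$, which follows from your $d_jz_j=0$, rewrites these in terms of basis monomials $U^mx_{i-1}^{j+1}d_{i-1}\cdots d_{i-k}$ and $U^mu_ix_i^{j}d_id_{i-1}\cdots d_{i-k}$ with $m\le s$. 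Hence no element of $I_s$ has $U^{s+1}x_{i-1}$ in its support, while $U^{s+1}z$ does. Note that your chain, restricted to the corner $e_j\cH e_j$ (the only place where wrap-around matters, since for $m\le n-2$ non-membership follows from vertex bookkeeping once $w_{m+1}\neq0$ is checked against the basis), reduces to exactly the paper's chain, so this computation cannot be avoided; as written, the $(2)\Rightarrow(1)$ direction of your proof is incomplete.
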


Finally, in Section \ref{sec.iso} we consider the isomorphism problem for the graded quiver down-up algebras. This is related to recent work in \cite{GZiso} on isomorphisms of certain quantized preprojective algebras.

\begin{thmintro}[Theorem \ref{thm.iso}]
Fix $n \geq 3$. Let $\alpha,\alpha' \in \kk^n$ and $\beta,\beta' \in (\kk^\times)^n$. Then $\cH(\alpha,\beta,0) \iso \cH(\alpha',\beta',0)$ if and only if there exists $\lambda\in(\kk^\times)^n$ and $k \in Q_0$ such that one of the following hold:
\begin{enumerate}
	\item $\alpha_i' = \lambda_{i+k}\lambda_{i+k-1}\inv \alpha_{i+k}$,\quad
	$\beta_i' = \lambda_{i+k+1} \lambda_{i+k-1}\inv \beta_{i+k}$, or

	\item $\alpha_i' = -\lambda_{n-i-k}\inv\lambda_{n-i-k-1} \beta_{n-i-k-1}\inv\alpha_{n-i-k-1}$, \quad
	$\beta_i' = \lambda_{n-i-k}\inv\lambda_{n-i-k-2}\inv \beta_{n-i-k-1}\inv$.
\end{enumerate}
\end{thmintro}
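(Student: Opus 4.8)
The plan is to treat the two implications separately, with essentially all of the difficulty in the \emph{only if} direction. First I would reduce an arbitrary algebra isomorphism $\phi\colon\cH(\alpha,\beta,0)\to\cH(\alpha',\beta',0)$ to a graded one that respects the vertex idempotents. Since $\gamma=0$ the defining relations are homogeneous of degree three, so both algebras are $\NN$-graded with degree-zero part $\cH_0\iso\kk^n$ semisimple and are generated over $\cH_0$ by $\cH_1=\bigoplus_{i\in Q_0}(\kk u_i\oplus\kk d_i)$. Any isomorphism carries a complete set of primitive orthogonal idempotents to another such set, so after composing with an inner automorphism I may assume $\phi(e_i)=e_{\sigma(i)}$ for a permutation $\sigma$ of $Q_0$, and that $\phi$ is graded with $\phi(\cH_1)=\cH_1$. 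Passing to the simple modules $S_i=\kk e_i$, the isomorphism induces an isomorphism of Ext-quivers; as the Ext-quiver of each $\cH$ is exactly the double cycle $Q$, the permutation $\sigma$ underlies a quiver automorphism of $Q$.

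The key structural input is that, for $n\geq 3$, the automorphism group of the double cycle $Q$ is the dihedral group of order $2n$: every automorphism of the underlying $n$-cycle lifts uniquely, and it either preserves the orientation of all arrows, giving a rotation $e_i\mapsto e_{i+k}$ with $\kk u_i\mapsto\kk u_{i+k}$ and $\kk d_i\mapsto\kk d_{i+k}$, or reverses it, giving a reflection with $\kk u_i\mapsto\kk d_{n-i-k-1}$ and $\kk d_i\mapsto\kk u_{n-i-k-1}$. The hypothesis $n\geq 3$ is precisely what rules out extra symmetries and forbids interchanging the two opposite arrows between a fixed pair of adjacent vertices. This dichotomy is the source of the two cases in the theorem, and $k$ is the rotation parameter.

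In the rotation case I would write $\phi(u_i)=a_iu_{i+k}$ and $\phi(d_i)=b_id_{i+k}$ with $a_i,b_i\in\kk^\times$, apply $\phi$ to the relation $d_{i-1}u_{i-1}u_i=\alpha_iu_id_iu_i+\beta_iu_iu_{i+1}d_{i+1}$, rewrite the image using the corresponding relation of $\cH(\alpha',\beta',0)$ at index $i+k$, and compare coefficients of the length-three monomials; by the $\kk$-basis of Section~\ref{sec.basis} these monomials are independent, which forces $\alpha'_{i+k}=\alpha_i\,a_ib_i(a_{i-1}b_{i-1})\inv$ and $\beta'_{i+k}=\beta_i\,a_{i+1}b_{i+1}(a_{i-1}b_{i-1})\inv$. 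Setting $\lambda_i=a_ib_i$ (and relabeling $k$) absorbs the gauge freedom and yields exactly formula~(1); note that only the products $a_ib_i$ survive, reflecting the torus of rescaling automorphisms $u_i\mapsto tu_i,\ d_i\mapsto t\inv d_i$. The reflection case runs identically with $\phi(u_i)=a_id_{n-i-k-1}$ and $\phi(d_i)=b_iu_{n-i-k-1}$, but now the image of the $\beta$-monomial $u_{j+1}d_{j+1}d_j$ (with $j=n-i-k-1$) must be eliminated using the \emph{second} relation at index $j$, namely $\beta_j u_{j+1}d_{j+1}d_j=d_jd_{j-1}u_{j-1}-\alpha_j d_ju_jd_j$. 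Dividing by $\beta_j$ is exactly what produces the factor $\beta\inv$ in formula~(2), and the term $-\alpha_j d_ju_jd_j$ accounts for the sign in $\alpha'_i$; this is where the restriction $\beta,\beta'\in(\kk^\times)^n$ is used.

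Conversely, for the \emph{if} direction I would reconstruct $a_i,b_i$ from the given $\lambda$ (say $a_i=\lambda_i$, $b_i=1$), define $\phi$ on generators by the rotation or reflection template, and verify directly that it carries each defining relation of one algebra into the relations of the other, so that $\phi$ is a well-defined graded isomorphism whose inverse is of the same type. The main obstacle I anticipate is the \emph{only if} reduction of the opening paragraph: making rigorous that an abstract isomorphism can be normalized to a graded, idempotent-preserving one, since the grading must be recovered intrinsically (via the primitive idempotents up to conjugacy and the minimal generating space $\cH_1$) rather than assumed. The secondary difficulty is the index bookkeeping in the reflection case, where the orientation reversal $i\mapsto n-i-k-1$, the inversion of $\beta$, and the emergent sign must be tracked consistently across both relation families.
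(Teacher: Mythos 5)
Your proposal follows essentially the same route as the paper: reduce to a graded isomorphism permuting the vertex idempotents, identify the induced permutation as an element of the dihedral group of the cycle (using that $Q$ is schurian for $n\geq 3$), split into the rotation and reflection cases and compare coefficients of the independent length-three basis monomials, with the converse supplied by the explicit scaling/rotation/reflection isomorphisms. The one step you flag as the main obstacle --- normalizing an abstract isomorphism to a graded one --- is exactly where the paper invokes an external result, \cite[Theorem 8]{Gpath}, rather than carrying out the intrinsic recovery of the grading from $J/J^2$ that you sketch; with that citation in hand the rest of your argument matches the paper's.
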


In a companion piece \cite{GK2}, the authors show that certain graded quiver down-up algebras appear as normal extensions of preprojective algebras. This gives an alternate way to realize some of the results in this paper.

\subsection*{Acknowledgements}
Gaddis is partially supported by an AMS–Simons Research Enhancement Grant for PUI Faculty. The authors thank Dan Rogalski for helpful conversations

\section{Quivers and a basis for quiver down-up algebras}
\label{sec.basis}

A \emph{quiver} $Q$ is a directed graph. More precisely, it is a set $Q_0$ of vertices and a set $Q_1$ of edges along with maps $\fs,\ft: Q_1 \to Q_0$. For each $a \in Q_1$, $\fs(a)$ is the \emph{source} and $\ft(a)$ is the \emph{target}. A \emph{path} in $Q$ is a string $p=a_0a_1\hdots a_m$ with $\ft(a_i)=\fs(a_{i+1})$ for $i = 0,\hdots,m-1$. The maps $\fs,\ft$ then extend naturally to the set of paths with $\fs(p)=\fs(a_0)$ and $\ft(p)=\ft(a_m)$. Throughout, we assume that $Q_0$ and $Q_1$ are finite.

The set of paths in $Q$ form a $\kk$-linear basis of the \emph{path algebra} $\kk Q$. Multiplication in $\kk Q$ is given by concatenation. That is, for paths $p$ and $q$, $p \cdot q = pq$ if $\ft(p)=\fs(q)$ and $p \cdot q = 0$ otherwise. At each vertex $k \in Q_0$ there is a trivial path $e_k$ which has the properties $e_k p = p$ (resp. $pe_k=p$) if $\fs(p)=k$ (resp. $\ft(p)=k$) and $0$ otherwise. For paths on $\kk Q$ as in \eqref{eq.dblA}, the subscripts should be taken modulo $n$.

There is a filtration on $Q$ by path length, which extends naturally to $\kk Q$. An element $f \in \kk Q$ is \emph{homogeneous} if $f$ is the sum of elements of the same path length. An ideal $I$ in $\kk Q$ is \emph{homogeneous} if it is generated by homogeneous elements.

\begin{proposition}\label{prop.basis}
Let $\cH=\cH(\alpha,\beta,\gamma)$ with $\alpha,\beta,\gamma \in \kk^n$.
A $\kk$-basis for $\cH$ consists of paths of the form:
\begin{align}\label{eq.Aform}
    (u_iu_{i+1}\cdots u_{i+k})(d_{i+k}u_{i+k})^j(d_{i+k}d_{i+k-1} \cdots d_{i+k-\ell}),
\end{align}
with $i\in Q_0$ and $j,k,\ell \geq 0$. 
\end{proposition}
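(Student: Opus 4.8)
The plan is to prove spanning and linear independence simultaneously via Bergman's Diamond Lemma, viewing $\cH$ as the quotient of the free path algebra $\kk Q$ by the reduction system attached to the two relation families. First I would orient each defining relation so that its cubic left-hand monomial is the leading term, giving the rewriting rules
\[
d_{i-1}u_{i-1}u_i \mapsto \alpha_i u_id_iu_i + \beta_i u_iu_{i+1}d_{i+1} + \gamma_i u_i, \qquad d_id_{i-1}u_{i-1} \mapsto \alpha_i d_iu_id_i + \beta_i u_{i+1}d_{i+1}d_i + \gamma_i d_i,
\]
one pair for each $i\in Q_0$. Since any walk on the doubled cycle is determined by its sequence of up/down steps (the vertex indices being forced), I record a path by its $U/D$-pattern; the two leading terms are then the patterns $DUU$ and $DDU$. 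To invoke the Diamond Lemma I need a semigroup partial order with the descending chain condition under which every rule strictly reduces. I would order monomials first by path length and then, among equal lengths, by the integer read left-to-right with $D=1$, $U=0$ (most significant bit on the left). A direct check shows $DUU=100_2$ and $DDU=110_2$ strictly dominate every right-hand monomial (of pattern value $010_2,001_2$, resp. $101_2,011_2$, together with the strictly shorter $\gamma$-terms), and that this order is compatible with left and right multiplication and satisfies DCC.

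Next I would identify the irreducible monomials, i.e. the paths containing neither $DUU$ nor $DDU$ as a subword. Forbidding $DUU$ says no maximal $U$-run other than a possibly initial one may have length $\geq 2$, while forbidding $DDU$ says no maximal $D$-run other than a possibly final one may have length $\geq 2$. Decomposing a reduced path into maximal runs therefore forces the shape $U^a(DU)^jD^b$, which is exactly the displayed form \eqref{eq.Aform} with $a=k+1$ and $b=\ell+1$; the boundary cases (empty ascending or descending run, pure loops, and the trivial paths $e_i$) are the evident degenerate specializations, included under the convention that an empty product of arrows is the relevant idempotent. Because the order is reducing and satisfies DCC, every element of $\cH$ rewrites to a $\kk$-combination of these normal forms, so they span.

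The crux is confluence. I would enumerate overlap ambiguities by matching a proper suffix of one leading term against a proper prefix of another. Both leading patterns begin with $D$ and end with $U$, so the only nontrivial match is the suffix $d_{i-1}u_{i-1}$ of $d_id_{i-1}u_{i-1}$ against the prefix $d_{i-1}u_{i-1}$ of $d_{i-1}u_{i-1}u_i$; this yields a single family of ambiguities, the degree-four paths $w=d_id_{i-1}u_{i-1}u_i$ (one per $i\in Q_0$), and there are no inclusion ambiguities since both leading terms have the same length. The one computation that must be carried out is to reduce $w$ in its two ways—resolving the prefix $d_id_{i-1}u_{i-1}$ versus resolving the suffix $d_{i-1}u_{i-1}u_i$—and to verify that, after further reductions (which reintroduce the rules at index $i+1$, e.g. on the subwords $d_{i+1}d_iu_i$ and $d_iu_iu_{i+1}$), the two branches reach the same normal-form combination. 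I expect this verification to be the main obstacle: it is where the precise coefficients $\alpha_i,\beta_i,\gamma_i$ enter, where the cyclic index bookkeeping must be tracked, and it is essentially the reason the down-up relations are written as they are.

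Granting that this single ambiguity resolves, the Diamond Lemma gives that the irreducible paths form a $\kk$-basis of $\cH$, as claimed. Two remarks are in order. One should confirm that no extra overlaps arise for small $n$ (say $n=1,2$); but the ambiguity analysis happens at the level of the length-$\leq 4$ patterns $DUU$, $DDU$, $DDUU$, which are independent of $n$, and the resolving reductions invoke only the rules at indices $i$ and $i+1$, which exist for every $i\in\ZZ/n\ZZ$, so the argument is uniform in $n$. As an alternative to the Diamond Lemma, one could establish spanning directly by induction on path length and then prove independence by exhibiting a faithful representation of $\cH$ on the free module with basis the normal forms; the Diamond Lemma merely packages both halves at once.
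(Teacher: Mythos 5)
Your proposal is essentially the paper's own proof: the paper also runs the Diamond Lemma on $\kk Q$ with the same choice of leading terms $d_{i-1}u_{i-1}u_i$ and $d_id_{i-1}u_{i-1}$, identifies the single overlap family $d_id_{i-1}u_{i-1}u_i$, and reads off the normal forms \eqref{eq.Aform} as the paths avoiding those two subwords. The one step you defer---resolving that overlap---is in fact the entire content of the paper's proof, and it does go through: the $\alpha_i$ and $\gamma_i$ terms cancel immediately in the difference of the two reductions, leaving $\beta_i\bigl(u_{i+1}(d_{i+1}d_iu_i)-(d_iu_iu_{i+1})d_{i+1}\bigr)$, which vanishes after one further application of the rules at index $i+1$, exactly as you predicted.
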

\begin{proof}
Let $Q$ be as in \eqref{eq.dblA}. We order the arrows of $Q$ by
\[d_0 > d_1 > \cdots > d_{n-1} > u_0 > u_1 > \cdots > u_{n-1}.\]
The leading terms of the relations under this ordering are $d_id_{i-1}u_{i-1}$ and $d_{i-1}u_{i-1}u_i$ for all $i$. The only overlaps between these terms are of the form $d_id_{i-1}u_{i-1}u_i$. We claim that these overlap ambiguities resolve:
\begin{align*}
(d_id_{i-1}u_{i-1})u_i &-  d_i(d_{i-1}u_{i-1}u_i) \\
	&= (\alpha_i d_iu_id_i + \beta_i u_{i+1}d_{i+1}d_i +\gamma_i d_i) u_i
		- d_i(\alpha_i u_id_iu_i + \beta_i u_iu_{i+1}d_{i+1} +\gamma_i u_i) \\
	&=  \beta_i \left( u_{i+1}(d_{i+1}d_iu_i) -  (d_iu_iu_{i+1})d_{i+1} \right) \\
	&=  \beta_i \left( u_{i+1}(\alpha_{i+1} d_{i+1}u_{i+1}d_{i+1} + \beta_{i+1} u_{i+2}d_{i+2}d_{i+1} +\gamma_{i+1} d_{i+1}) \right. \\
	&\qquad -  \left. (\alpha_{i+1} u_{i+1}d_{i+1}u_{i+1} + \beta_{i+1} u_{i+1}u_{i+2}d_{i+2} +\gamma_{i+1} u_{i+1})d_{i+1} \right) = 0,
\end{align*}
as claimed. Hence, a $\kk$-basis for $\cH$ consists of those paths which avoid the leading terms $d_id_{i-1}u_{i-1}$ and $d_{i-1}u_{i-1}u_i$. It is easily seen that these are the paths \eqref{eq.Aform}.
\end{proof}

Let $\Gamma$ be an abelian monoid. An algebra $A$ is \emph{$\Gamma$-graded} if there is a vector space decomposition $A=\bigoplus_{k \in \Gamma} A_k$ such that $A_k \cdot A_\ell \subset A_{k+\ell}$. A graded algebra $A$ is \emph{locally finite} if $\dim_{\kk} A_k < \infty$ for all $k$.

The quiver down-up algebras $H(\alpha,\beta,0)$ are $\NN$-graded and locally finite via the path-length filtration ($\deg(u_i)=\deg(d_i)=1$ for all $i \in Q_0$). This is the grading we use throughout, however there is also a $\ZZ$-grading on $\cH(\alpha,\beta,\gamma)$ obtained by setting $\deg(u_i)=1$ and $\deg(d_i)=-1$ for all $i \in Q_0$.

Given a quiver $Q$ with $|Q_0|=n$, the associated \emph{adjacency matrix} is $M_Q \in M_n(\NN)$ where $(M_Q)_{ij}$ denotes the number of arrows with source $i$ and target $j$. Let $I$ be a homogeneous ideal in $\kk Q$ and set $A=\kk Q/I$. For each $k \in \NN$, let $H_k \in M_n(\NN)$ where $(H_k)_{ij}=\dim(e_i A e_j)_k$.
Note if $I \subset (\kk Q)^2$, then $H_1 = M_Q$. The \emph{matrix-valued Hilbert series} of $A$ is defined as 
\[ h_A(t) = \sum_{k=0}^\infty H_k t^k.\]
The \emph{total Hilbert series} of $A$, $h_A^{\tot}(t)$, is the infinite series in which the coefficient of $t^k$ is the sum of the entries of $H_k$.

\begin{example}\label{ex.preproj}
Let $Q$ be as in \eqref{eq.dblA}. Define the algebra
\[ A = \kk Q/(d_iu_i - u_{i+1}d_{i+1} : i \in Q_0).\]
The algebra $A$ is a particular preprojective algebra, and it is well-known that $A$ is graded Calabi--Yau algebra of dimension two. Thus, $h_A = (I-Mt+It^2)\inv$ \cite[Theorem 1.2]{RR1}. 

It is easy to see from the relations that a $\kk$-basis for $A$ consists of paths of the form 
\begin{align}\label{eq.preproj}
    u_iu_{i+1} \cdots u_{i+k}d_{i+k}d_{i+k-1} \cdots d_{i+k-\ell}.
\end{align}
For each vertex $i \in Q_0$, there is a bijection between paths of the form \eqref{eq.preproj} with source $e_i$ and the monomial basis of $\kk[a,b]$. It follows that $h_A^{\tot} = n(1-t)^{-2}$.
\end{example}

We now use a similar strategy to establish a basis for the graded quiver down-up algebras.

\begin{corollary}\label{cor.hilb}
Suppose $\gamma=0$. Then the total Hilbert series of $\cH=\cH(\alpha,\beta,0)$ is 
\[ h_{\cH}^{\tot} = n(1-t)^{-2}(1-t^2)\inv.\]
\end{corollary}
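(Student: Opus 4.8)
The plan is to read the total Hilbert series straight off the monomial basis of Proposition \ref{prop.basis}. By the definition of the total Hilbert series, the coefficient of $t^k$ is the sum of the entries of $H_k$, which is simply the total number of basis paths of length $k$ (summed over all source–target pairs). Since the basis paths are exactly the normal forms \eqref{eq.Aform}, I would reduce the problem to the purely combinatorial task of counting those normal forms by length.

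First I would record the length of a normal form. The path \eqref{eq.Aform} with source $e_i$ splits canonically into an ascending run $u_i\cdots u_{i+k}$, a ``down–up'' block $(d_{i+k}u_{i+k})^j$ sitting at the peak vertex $e_{i+k+1}$, and a descending run $d_{i+k}\cdots d_{i+k-\ell}$. Writing $p$ and $q$ for the number of arrows in the ascending and descending runs (either, or both, allowed to be empty) and $j$ for the block exponent, these three data are independent nonnegative integers, and the path length is $p+2j+q$. The content of Proposition \ref{prop.basis} is precisely that, for each fixed source $e_i$, the assignment $(p,j,q)\mapsto{}$normal form is a bijection of $\NN^3$ onto the basis paths starting at $e_i$, so no triple is counted twice. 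Consequently the per-vertex generating function is independent of $i$ and factors as a product of three geometric series:
\[
\sum_{p,j,q\geq 0} t^{\,p+2j+q}
= \Bigl(\sum_{p\geq 0}t^{p}\Bigr)\Bigl(\sum_{j\geq 0}t^{2j}\Bigr)\Bigl(\sum_{q\geq 0}t^{q}\Bigr)
= (1-t)^{-2}(1-t^2)\inv .
\]
Summing over the $n$ source vertices then yields $h_{\cH}^{\tot}=n(1-t)^{-2}(1-t^2)\inv$. Equivalently — and this is the ``similar strategy'' suggested by Example \ref{ex.preproj} — deleting the block $(d_{i+k}u_{i+k})^j$ from \eqref{eq.Aform} returns the preprojective normal form \eqref{eq.preproj}, so the normal forms of $\cH$ are in length-preserving bijection with pairs (normal form of $A$, exponent $j\in\NN$), the insertion contributing exactly the extra factor $(1-t^2)\inv$ to $h_A^{\tot}=n(1-t)^{-2}$.

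I do not expect a serious obstacle once the basis is in hand; the hard work was already done in Proposition \ref{prop.basis}, and what remains is bookkeeping. The two points I would be careful about are: fixing the empty-run conventions consistently, so that the idempotents ($p=q=j=0$) and the pure down–up loops ($p=q=0$, $j>0$) are each counted once and at the correct length; and invoking Proposition \ref{prop.basis} explicitly to guarantee that $(p,j,q)\mapsto{}$normal form is \emph{injective}, so that the three geometric series genuinely multiply without overcounting, rather than attempting to re-verify here that distinct triples give distinct paths even when the arrow indices wrap around modulo $n$.
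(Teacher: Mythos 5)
Your proof is correct and takes essentially the same approach as the paper: the paper likewise identifies the basis paths of Proposition \ref{prop.basis} emanating from a fixed source with the monomials of a weighted polynomial ring $\kk[a,b,c]$ in which $b$ has degree two, reads off the per-vertex series $(1-t)^{-2}(1-t^2)^{-1}$, and multiplies by $n$. Your extra care with the empty-run conventions and with injectivity of the parametrization $(p,j,q)\mapsto$ normal form is a welcome tightening of the same argument, not a different one.
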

\begin{proof}
There is a bijection between paths of the form \eqref{eq.Aform} with source vertex $i$ and monomials $a^{k+1} b^j c^{\ell+1}$ in $\kk[a,b,c]$ where $b$ is given degree two. The Hilbert series of $\kk[a,b,c]$ is $(1-t)^{-2}(1-t^2)\inv$.
\end{proof}

In Lemma \ref{lem.hilb} we compute the matrix-valued Hilbert series of $\cH(\alpha,\beta,0)$.

\section{Generalized Weyl algebras}
\label{sec.gwa}

In this section we establish several key properties of quiver down-up algebras by recognizing them as generalized Weyl algebras. It is well known that the single-vertex down-up algebras appear in this way \cite{KMP}.

Let $R$ be a $\kk$-algebra and $\sigma$ an automorphism of $R$. A \emph{$\sigma$-derivation} of $R$ is a $\kk$-linear map $\delta:R \to R$ satisfying
$\delta(rr') = \sigma(r)\delta(r') + \delta(r)r'$ for all $r,r' \in R$. Given the above, the \emph{Ore extension} $S=R[x;\sigma,\delta]$ is generated over $R$ by $x$ subject to the relations $xr = \sigma(r)x + \delta(r)$ for all $r \in R$. If $R$ is twisted Calabi--Yau of dimension $d$, then $S$ is twisted Calabi--Yau of dimension $d+1$ \cite[Theorem 3.3]{LWW2}.

Suppose that $R=\kk Q/I$ with $I \subset \kk Q_{\geq 2}$ homogeneous. Further, suppose that $\sigma$ and $\delta$ are graded. This implies that $\sigma$ permutes the set of idempotents, so $\sigma(e_i)=e_{\sigma(i)}$, and $\sigma$ commutes with the source and target maps in the sense that, if $a \in Q_1$ with $\fs(a)=e_i$ and $\ft(a)=e_j$, then $\fs(\sigma(a))=e_{\sigma(i)}$ and $\ft(\sigma(a))=e_{\sigma(j)}$. Moreover, $\delta(a) \in e_{\sigma(i)} R_2 e_{j}$. In $S=R[x;\sigma,\delta]$, we have $x = \sum x_i$ where $x_i = xe_i = e_{\sigma(i)}x$. Then for $a \in e_i R e_j$, $x_i a = \sigma(a)x_j + \delta(a)$ is a homogeneous relation from $e_{\sigma(i)}$ to $e_j$. 

As above, let $R$ be an algebra and $\sigma$ an automorphism of $R$. Let $h$ be a regular central element of $R$. The \emph{generalized Weyl algebra} (GWA) $R(\sigma,h)$ is the algebra generated by $R$ and $X^+,X^-$ subject to the relations
\[
	X^-X^+ = h, \qquad X^+X^- = \sigma(h), \qquad
	X^+r = \sigma(r)X^+, \qquad X^-\sigma(r)=rX^-,
\]
for all $r \in R$.

Every GWA is (isomorphic to) a quotient of an iterated Ore extension. Let $S=R[X^-;\sigma][X^+;\sigma\inv,\delta]$ where $\delta(r)=0$ for all $r\in R$ and $\delta(X^-) = \sigma(h)-h$. Then $R(\sigma,h) \iso S/(X^-X^+-h)$, so $R(\sigma,h)$ is noetherian if $R$ is.

Let $R = \bigoplus_{i \in Q_0} \kk[x_i,y_i]$ and denote the identity in $\kk[x_i,y_i]$ by $e_i$ for each $i \in Q_0$. If $\beta_i \neq 0$ for all $i \in Q_0$, then the linear map given by 
\begin{align}\label{eq.gwa_auto}
\sigma(e_i)=e_{i+1}, \quad 
\sigma(x_i)=y_{i+1}, \quad
\sigma(y_i)=\alpha_i y_{i+1} + \beta_i x_{i+1} + \gamma_i,
\end{align} 
defines an automorphism of $R$. Let $x = \sum x_i$ and define the GWA $T=R(\sigma,x)$. We define elements $X_i^-,X_i^+ \in T$ by
$X_i^- = X^-e_i = e_{i+1}X^-$ and $X_i^+ = e_iX^+ = X^+e_{i+1}$. These satisfy the relations
\[ X_i^-r = \sigma\inv(r)X_i^-, \quad X_i^+r = \sigma(r)X_i^+, \quad
X_i^-X_i^+ = x_i, \quad X_i^+X_i^- = \sigma(x_i) = y_{i+1}.\]

\begin{proposition}\label{prop.gwa}
Keep the above setup and assume $\beta_i \neq 0$ for all $i\in Q_0$.
Then $T\iso \cH(\alpha,\beta,\gamma)$.
\end{proposition}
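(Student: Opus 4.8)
The plan is to construct an explicit isomorphism $\phi: \mathcal{H}(\alpha,\beta,\gamma) \to T$ and verify it is a bijection. The natural candidates, dictated by the shape of the GWA relations $X_i^- X_i^+ = x_i$ and $X_i^+ X_i^- = y_{i+1}$ together with the twisting automorphism $\sigma$, are to send the arrows of $Q$ to the GWA generators. Since $u_i \in e_{i-1}\mathcal{H}e_i$ (reading the quiver, $u_i$ goes from vertex $i$ toward vertex $i-1$ along the "up" arrows) and likewise $d_i \in e_i \mathcal{H} e_{i-1}$, I would set $\phi(u_i) = X_{i-1}^-$ and $\phi(d_i) = X_{i-1}^+$ (up to indexing conventions and possibly a scalar), matching idempotents $\phi(e_i) = e_i$. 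The first task is to fix the indices so that sources and targets agree on both sides.

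The key step is then to check that $\phi$ respects the two defining relations of $\mathcal{H}(\alpha,\beta,\gamma)$, namely
\begin{align*}
	d_{i-1}u_{i-1}u_i &= \alpha_i u_id_iu_i + \beta_i u_iu_{i+1}d_{i+1} +\gamma_i u_i,\\
	d_id_{i-1}u_{i-1} &= \alpha_i d_iu_id_i + \beta_i u_{i+1}d_{i+1}d_i +\gamma_i d_i.
\end{align*}
After applying $\phi$, each monomial becomes a word in the $X^\pm$, and I would use the GWA commutation rules $X_i^+ r = \sigma(r)X_i^+$ and $X_i^- r = \sigma\inv(r)X_i^-$ to move all the $X$'s to the outside and collapse the inner products $X_j^- X_j^+ = x_j$, $X_j^+ X_j^- = y_{j+1}$ into elements of $R$. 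The crucial input is the explicit formula \eqref{eq.gwa_auto} for $\sigma$, specifically $\sigma(y_i) = \alpha_i y_{i+1} + \beta_i x_{i+1} + \gamma_i$: this is precisely the linear relation among $x_{i+1}$, $y_{i+1}$, and $1$ that should reproduce the coefficients $\alpha_i, \beta_i, \gamma_i$ appearing in the $\mathcal{H}$-relations. I expect both sides to reduce to the same element of the form $(\text{monomial in }X^\pm)\cdot r$, confirming that $\phi$ descends to a well-defined algebra homomorphism $\mathcal{H}(\alpha,\beta,\gamma) \to T$.

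To show $\phi$ is an isomorphism, the cleanest route is a dimension/Hilbert-series count rather than constructing an explicit inverse. I already have a $\kk$-basis for $\mathcal{H}$ from Proposition \ref{prop.basis}, given by the paths \eqref{eq.Aform}. On the GWA side, the standard normal form for $R(\sigma,x)$ expresses every element uniquely as a sum of terms $(X^+)^a r$ and $(X^-)^b r$ with $r \in R = \oplus_i \kk[x_i,y_i]$; combined with the idempotent decomposition, this yields a $\kk$-basis for $T$ that matches \eqref{eq.Aform} vertex by vertex and degree by degree. I would verify that $\phi$ carries the basis of $\mathcal{H}$ to a spanning set for $T$, so that surjectivity plus the matching graded dimensions forces $\phi$ to be bijective.

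The main obstacle is bookkeeping: getting the index shifts between $u_i, d_i$ and $X_{i-1}^\pm$ exactly right so that the idempotent sources and targets line up, and then carrying the $\sigma$-twisting through the triple products without sign or index errors. The conceptual content is entirely contained in the identity $\sigma(x_i) = y_{i+1}$ and $\sigma(y_i) = \alpha_i y_{i+1} + \beta_i x_{i+1} + \gamma_i$; once the relations are verified on generators, well-definedness and bijectivity are essentially forced by the basis from Proposition \ref{prop.basis} and the GWA normal form, so the risk is computational rather than structural.
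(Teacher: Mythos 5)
Your construction of the homomorphism coincides with the paper's: the paper defines $\theta(u_i)=X_i^-$, $\theta(d_i)=X_i^+$ (your shift to $X_{i-1}^\pm$ is only a labelling difference) and verifies the two defining relations exactly as you describe, by pushing elements of $R$ past $X^\pm$ via $\sigma$, collapsing $X_i^-X_i^+=x_i$ and $X_i^+X_i^-=y_{i+1}$, and invoking $\sigma(y_i)=\alpha_i y_{i+1}+\beta_i x_{i+1}+\gamma_i$. Where you genuinely diverge is bijectivity: the paper simply writes down the inverse homomorphism $\theta'$ (sending $X_i^-\mapsto u_i$, $X_i^+\mapsto d_i$, $x_i\mapsto u_id_i$, $y_i\mapsto d_{i-1}u_{i-1}$) and checks it respects the GWA relations, whereas you propose surjectivity plus a dimension count against the basis of Proposition~\ref{prop.basis} and the GWA normal form $T=\bigoplus_{k\in\ZZ}Rv_k$. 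Your route is viable --- surjectivity is immediate since $x_i=\phi(u_id_i)$ and $y_{i+1}=\phi(d_iu_i)$, and a surjective filtered map between spaces with equal finite-dimensional filtration pieces is injective --- but it carries two pieces of bookkeeping the paper's argument avoids. First, for $\gamma\neq 0$ the algebra $\cH(\alpha,\beta,\gamma)$ is only filtered by path length, not graded, so the comparison must be made on filtered pieces (Corollary~\ref{cor.hilb} is stated only for $\gamma=0$, though Proposition~\ref{prop.basis} holds in general and suffices). Second, the images of the basis paths \eqref{eq.Aform} are not literally the normal-form monomials of $T$: for instance $u_iu_{i+1}d_{i+1}d_i\mapsto \sigma\inv(x_{i+1})x_i=\beta_i\inv(y_i-\alpha_ix_i-\gamma_i)x_i$, so one must argue that the images are triangular with respect to the monomial basis of $R$ with invertible leading coefficients (this is precisely where $\beta_i\neq 0$ re-enters), or else bypass this by counting dimensions on both sides independently. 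The explicit inverse buys a shorter, hypothesis-transparent proof; your version buys nothing extra here but would generalize to settings where an inverse is harder to guess.
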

\begin{proof}
Set $\cH = \cH(\alpha,\beta,\gamma)$.
Define a map $\theta:\cH \to T$ by 
$\theta(u_i)=X_i^-$ and $\theta(d_i)=X_i^+$ for all $i\in Q_0$. Then
\begin{align*}
\theta(d_{i-1})\theta(u_{i-1})\theta(u_i) &- \alpha_i \theta(u_i)\theta(d_i)\theta(u_i) - \beta_i \theta(u_i)\theta(u_{i+1})\theta(d_{i+1}) - \gamma_i \theta(u_i) \\
	&= X_{i-1}^+X_{i-1}^-X_i^- - \alpha_i	X_i^-X_i^+X_i^-
		- \beta_i X_i^-X_{i+1}^-X_{i+1}^+ - \gamma_i X_i^- \\
	&= y_iX_i^- - \alpha_i	X_i^-X_i^+X_i^-
		- \beta_i X_i^-X_{i+1}^-X_{i+1}^+ - \gamma_i X_i^- \\
	&= X_i^-\sigma(y_i) - \alpha_i	X_i^-X_i^+X_i^-
		- \beta_i X_i^-X_{i+1}^-X_{i+1}^+ - \gamma_i X_i^- \\
	&= X_i^-(\alpha_i y_{i+1} + \beta_i x_{i+1} + \gamma_i) - \alpha_i X_i^-X_i^+X_i^- - \beta_i X_i^-X_{i+1}^-X_{i+1}^+ - \gamma_i X_i^- = 0, \\
\theta(d_i)\theta(d_{i-1})\theta(u_{i-1}) 
	&- \alpha_i \theta(d_i)\theta(u_i)\theta(d_i) - \beta_i \theta(u_{i+1})\theta(d_{i+1})\theta(d_i) +\gamma_i \theta(d_i) \\
	&= X_i^+X_{i-1}^+X_{i-1}^- - \alpha_i X_i^+X_i^-X_i^+ - \beta_i X_{i+1}^-X_{i+1}^+X_i^+ - \gamma_i X_i^+ \\
	&= X_i^+y_i - \alpha_i X_i^+X_i^-X_i^+ - \beta_i X_{i+1}^-X_{i+1}^+X_i^+ - \gamma_i X_i^+ \\
	&= \sigma(y_i)X_i^+ - \alpha_i X_i^+X_i^-X_i^+ - \beta_i X_{i+1}^-X_{i+1}^+X_i^+ - \gamma_i X_i^+ \\
	&= (\alpha_i y_{i+1} + \beta_i x_{i+1} + \gamma_i)X_i^+ - \alpha_i X_i^+X_i^-X_i^+ - \beta_i X_{i+1}^-X_{i+1}^+X_i^+ - \gamma_i X_i^+ = 0.
\end{align*}
Hence, $\theta$ extends to an algebra homomorphism.

Now define $\theta':T \to \cH$ by 
\[ \theta'(X_i^-) = u_i, \quad \theta'(X_i^+) = d_i, \quad
\theta'(x_i) = X_i^-X_i^+, \quad \theta'(y_i) = X_{i-1}^+X_{i-1}^-.\]
One verifies similarly that $\theta'$ defines an algebra homomorphism, which is clearly inverse to $\theta$.
\end{proof}

\begin{corollary}\label{cor.gwa_props}
Suppose $\beta_i \neq 0$ for all $i\in Q_0$. 
\begin{enumerate}
\item \label{gwa_prop1} The quiver down-up algebra $\cH(\alpha,\beta,\gamma)$ is noetherian.
\item \label{gwa_prop2} The subalgebra $\kk[u_id_i, d_{i-1}u_{i-1}]$ is a polynomial ring in two variables for all $i\in Q_0$.
\end{enumerate}
\end{corollary}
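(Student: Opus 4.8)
The plan is to deduce both statements directly from the isomorphism $T \iso \cH(\alpha,\beta,\gamma)$ established in Proposition~\ref{prop.gwa}, using the general structure theory of generalized Weyl algebras summarized just before that proposition. The base ring here is $R = \bigoplus_{i \in Q_0} \kk[x_i,y_i]$, which is a finite direct sum of polynomial rings and is therefore noetherian, and $\sigma$ is an automorphism of $R$ with $x = \sum x_i$ a regular central element.

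For part~\eqref{gwa_prop1}, I would invoke the remark preceding the GWA definition: every GWA $R(\sigma,h)$ is a quotient of the iterated Ore extension $S = R[X^-;\sigma][X^+;\sigma\inv,\delta]$, and hence $R(\sigma,h)$ is noetherian whenever $R$ is. Since $R$ is noetherian, $T = R(\sigma,x)$ is noetherian, and transporting along the isomorphism $\theta' : T \to \cH$ gives that $\cH(\alpha,\beta,\gamma)$ is noetherian. The only point requiring a sentence of justification is that $R$ itself is noetherian; this is immediate since each summand $\kk[x_i,y_i]$ is a commutative noetherian ring and a finite direct product of noetherian rings is noetherian.

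For part~\eqref{gwa_prop2}, the idea is to identify the subalgebra $\kk[u_id_i, d_{i-1}u_{i-1}]$ of $\cH$ with a polynomial subring of $R \subset T$ under $\theta$. Applying $\theta$, we have $\theta(u_id_i) = X_i^- X_i^+ = x_i$ and $\theta(d_{i-1}u_{i-1}) = X_{i-1}^+ X_{i-1}^- = \sigma(x_{i-1}) = y_i$ (using the relation $X_{i-1}^+X_{i-1}^- = \sigma(x_{i-1})$ and the definition $\sigma(x_{i-1}) = y_i$ from \eqref{eq.gwa_auto}). Thus $\theta$ carries $\kk[u_id_i, d_{i-1}u_{i-1}]$ onto $\kk[x_i, y_i] = e_i R e_i$, which is visibly a polynomial ring in two independent variables. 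Since $\theta$ is an isomorphism, it restricts to an isomorphism of these subalgebras, and the conclusion follows.

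I expect the main (and only genuine) obstacle to be bookkeeping of the index conventions rather than anything substantive: one must verify carefully that $X_{i-1}^+X_{i-1}^- = \sigma(x_{i-1})$ produces exactly $y_i$ and not a shifted index, so that the image is $e_iRe_i$ and not some off-diagonal piece. Once the two images $x_i, y_i$ are pinned down correctly, the algebraic independence is inherited for free from the polynomial ring $\kk[x_i,y_i]$, so no further computation is needed.
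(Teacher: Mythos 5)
Your proposal is correct and follows essentially the same route as the paper: part \eqref{gwa_prop1} is exactly the paper's (unstated but intended) appeal to the fact that $R(\sigma,x)$ is a quotient of an iterated Ore extension over the noetherian ring $R=\bigoplus_{i\in Q_0}\kk[x_i,y_i]$, and part \eqref{gwa_prop2} is the same computation $\theta(u_id_i)=x_i$, $\theta(d_{i-1}u_{i-1})=\sigma(x_{i-1})=y_i$, with algebraic independence pulled back along the isomorphism $\theta$. Your index bookkeeping is correct, so nothing further is needed.
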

\begin{proof}
Property \eqref{gwa_prop1} is clear. For \eqref{gwa_prop2}, let $\theta$ be as in Proposition \ref{prop.gwa}, so $\theta(u_id_i) = x_i$ and $\theta(d_{i-1}u_{i-1}) = y_i$. As $x_i$ and $y_i$ commute and are algebraically independent in $R$, the same is true of their preimages.
\end{proof}

Let $R$ be a ring with a complete set $e_0,\hdots,e_{n-1}$ of orthogonal idempotents. For example, $R=\kk Q$ with $Q$ a finite quiver. Then $R$ is a \emph{piecewise domain} (PWD) if $ab=0$ implies $a=0$ or $b=0$ for all $a \in e_i R e_k$ and $b \in e_k R e_j$ \cite{GSpwd}. For example, the direct sum of domains is a piecewise domains, and a polynomial extension of a piecewise domain is a piecewise domain. We extend these examples below.

\begin{proposition}\label{prop.pwd}
Let $R$ be a PWD with complete set $e_0,\hdots,e_{n-1}$ of orthogonal idempotents. Let $\sigma$ be an automorphism of $R$.
\begin{enumerate}
\item \label{pwd1} The Ore extension $R[x;\sigma,\delta]$ is a PWD.
\item \label{pwd2} The GWA $R(\sigma,h)$ is a PWD.
\end{enumerate}
\end{proposition}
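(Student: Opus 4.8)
The plan is to treat both parts by a single mechanism: a leading--coefficient argument in a suitable grading, reducing the vanishing of a product in $S=R[x;\sigma,\delta]$ (resp.\ in $R(\sigma,h)$) to the vanishing of a product of leading coefficients inside a single corner $e_pRe_q$ of $R$, where the PWD hypothesis on $R$ applies directly. Throughout I use that the $e_i$ remain a complete set of orthogonal idempotents in the larger ring, and that $\sigma$ permutes the $e_i$ (the situation of interest here, inherited from the graded setup), say $\sigma(e_i)=e_{\pi(i)}$ for a permutation $\pi$ of $Q_0$; this last point is exactly what keeps all of the idempotent bookkeeping inside the \emph{standard} corners, where $R$ is assumed to be a domain in the piecewise sense.

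For \eqref{pwd1}, write a nonzero element of $S$ as $f=\sum_m a_mx^m$ with $a_m\in R$ and let $a_M$ be its top coefficient. The identity $x^mr=\sigma^m(r)x^m+(\text{lower order in }x)$ shows that for $f,g\in S$ with top coefficients $a_M,b_{M'}$ the degree-$(M+M')$ part of $fg$ is $a_M\,\sigma^M(b_{M'})\,x^{M+M'}$. If $f\in e_iSe_k$ and $g\in e_kSe_j$, then matching idempotents across the powers of $x$ (using $xe_\ell=e_{\pi(\ell)}x+\delta(e_\ell)$ and comparing top degrees, the $\delta$-terms being of strictly lower $x$-degree) places $a_M\in e_iRe_{\pi^M(k)}$ and $\sigma^M(b_{M'})\in e_{\pi^M(k)}Re_{\pi^{M+M'}(j)}$. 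Thus $a_M$ and $\sigma^M(b_{M'})$ are composable in the single corner indexed by $e_{\pi^M(k)}$, so the PWD property of $R$ makes their product nonzero whenever $a_M\neq0$ and $b_{M'}\neq0$ (the latter giving $\sigma^M(b_{M'})\neq0$ since $\sigma$ is an automorphism). Hence $fg$ has nonzero top coefficient, and $fg=0$ forces $f=0$ or $g=0$.

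For \eqref{pwd2}, I will \emph{not} deduce the result from the realization $R(\sigma,h)\iso S/(X^-X^+-h)$, since the PWD property is not inherited by quotients (already $\kk[x,y]$ is a PWD but $\kk[x,y]/(xy)$ is not). Instead I argue directly in the $\ZZ$-grading with $\deg X^{\pm}=\pm1$ and $\deg R=0$, using the standard free $R$-basis $\{v_m\}$, where $v_m=(X^+)^m$ for $m\geq0$ and $v_m=(X^-)^{-m}$ for $m<0$. For nonzero $f=\sum_m r_mv_m$ and $g=\sum_m s_mv_m$ with top terms $r_Mv_M$ and $s_{M'}v_{M'}$, the degree-$(M+M')$ part of $fg$ is exactly $r_M\,\sigma^M(s_{M'})\,c\,v_{M+M'}$, where $c$ is either $1$ or a product of shifts $\sigma^t(h)$ arising from collapsing $X^+X^-=\sigma(h)$ and $X^-X^+=h$. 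Since $h$ is regular and central, so is each $\sigma^t(h)$ and hence so is $c$; therefore this top term vanishes if and only if $r_M\,\sigma^M(s_{M'})=0$ in $R$. The corner analysis of \eqref{pwd1} then applies verbatim (now using $v_me_k=e_{\pi^m(k)}v_m$), and the PWD property of $R$ finishes the argument.

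The main obstacle lives in \eqref{pwd2} and is twofold: first, one cannot simply quote \eqref{pwd1} through the quotient presentation, so the grading argument must be run intrinsically on the generalized Weyl algebra; second, the leading coefficient there is contaminated by the central factor $c$, and the crux is to observe that regularity and centrality of $h$ guarantee that $c$ contributes no zero divisors, so that vanishing of the top term is governed purely by the corner product $r_M\sigma^M(s_{M'})$ in $R$. The remaining work, the tracking of idempotents under $\sigma$, is routine once one knows $\sigma$ permutes the $e_i$.
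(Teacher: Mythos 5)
Your proof is correct and follows essentially the same leading-term argument as the paper for both parts; in fact your treatment of \eqref{pwd2} is more detailed than the paper's (which only asserts the argument is ``similar''), and your two explicit observations there --- that one cannot pass through the quotient presentation $R(\sigma,h)\iso S/(X^-X^+-h)$, and that the extra factor $c$ built from shifts of $h$ is regular and central and so does not affect the vanishing of the top-degree term --- are precisely the right points to record. The only caveat is that you add the hypothesis that $\sigma$ permutes the idempotents $e_i$, which is not part of the statement of the proposition but does hold in the paper's application and is implicitly relied upon in the paper's own proof as well (which applies the PWD property to corners of the form $e_iR\,\sigma^d(e_k)$).
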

\begin{proof}
\eqref{pwd1} Set $S=R[x;\sigma,\delta]$. Let $a \in e_i S e_k$ and $b \in e_k S e_j$ be nonzero. By definition of an Ore extension, $a = e_i\sum_{u=0}^d r_u x^u e_k$ and $b = e_k \sum_{v=0}^{d'} r_v x^v e_j$ with $r_u,r_v \in R$ for all $u,v$ and $r_d,r_{d'} \neq 0$. Now the top degree term of $ab$ is 
\[
(e_i r_d x^d e_k)(e_k r_{d'} x^{d'} e_j)
    = (e_i r_d \sigma^d(e_{k})) (\sigma^d(e_{k}) \sigma^d(r_{d'}) \sigma^{d+d'}(e_j)) x^{d+d'} + \text{(lower degree terms)}.
\]
Note that $0 \neq e_i r_d x^d e_k = e_i r_d \sigma^d(e_{k}) x^d$. That is, $e_i r_d \sigma^d(e_{k}) \neq 0$. Similarly, $e_k r_{d'} \sigma^{d'}(e_j) \neq 0$. Thus,
\[ 0 \neq \sigma^d(e_k r_{d'} \sigma^{d'}(e_j))
    = \sigma^d(e_k) \sigma^d(r_{d'}) \sigma^{d+d'}(e_j).\]
Because $R$ is a PWD, then $(e_i r_d \sigma^d(e_{k})) (\sigma^d(e_{k}) \sigma^d(r_{d'}) \sigma^{d+d'}(e_j)) \neq 0$. Thus, $ab \neq 0$, so $S$ is a PWD.

\eqref{pwd2} This is similar and requires only minimal modification. Set $T=R(\sigma,h)$. Let $a\in e_iTe_k$ and $b \in e_kTe_j$. For $k \in \ZZ$, let $X^k = (X^+)^k$ if $k>0$ and $X^k = (X^-)^{-k}$ if $k<0$. Then
$a=e_i\sum_{u=d_1}^{d_2} r_u X^u e_k$ and $b=e_k\sum_{v=d_1'}^{d_2'} r_v X^v e_j$ with $r_u,r_v \in R$ for all $u,v$ and $r_{d_1},r_{d_2},r_{d_1'},r_{d_2'} \neq 0$. The remainder of the proof follows similarly. In particular, the top degree term of $ab$ is $ (e_i r_{d_2} x^{d_2} e_k)(e_k r_{d_2'} x^{d_2'} e_j)$.
\end{proof}

\begin{corollary}\label{cor.pwd}
Suppose $\beta_i \neq 0$ for all $i$. Then $\cH(\alpha,\beta,\gamma)$ is a PWD.
\end{corollary}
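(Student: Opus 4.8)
The plan is to combine the two main results of this section. First I would invoke Proposition~\ref{prop.gwa}, which under the standing hypothesis $\beta_i \neq 0$ for all $i \in Q_0$ identifies $\cH(\alpha,\beta,\gamma)$ with the generalized Weyl algebra $T = R(\sigma,x)$ over the base ring $R = \bigoplus_{i \in Q_0} \kk[x_i,y_i]$. Since being a piecewise domain is manifestly an isomorphism invariant, it suffices to prove that $T$ is a PWD.

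Next I would check that the base ring $R$ is itself a PWD with respect to the complete set of orthogonal idempotents $e_0,\dots,e_{n-1}$, where $e_i$ denotes the identity of the summand $\kk[x_i,y_i]$. This is the one place where the structure of $R$ is used: for indices $i,k,j$ the block $e_i R e_k$ equals $\kk[x_i,y_i]$ when $i=k$ and is zero otherwise. Hence if $a \in e_i R e_k$ and $b \in e_k R e_j$ are both nonzero, then necessarily $i=k=j$, so $a,b \in \kk[x_i,y_i]$, a polynomial ring and therefore a domain, whence $ab \neq 0$. Thus $R$ is a direct sum of domains and so a PWD, exactly as noted in the discussion preceding Proposition~\ref{prop.pwd}.

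Finally I would apply Proposition~\ref{prop.pwd}\eqref{pwd2}, which asserts that the GWA $R(\sigma,h)$ of a PWD $R$ is again a PWD; taking $h = x$ shows $T = R(\sigma,x)$ is a PWD, and transporting this conclusion back along the isomorphism of Proposition~\ref{prop.gwa} finishes the argument. I do not anticipate a genuine obstacle, as the corollary is essentially a formal consequence of the GWA realization together with Proposition~\ref{prop.pwd}. The only point meriting a moment of care is confirming that $R$ satisfies the piecewise-domain condition relative to the correct idempotents, namely that its off-diagonal blocks vanish while its diagonal blocks are integral domains; once this is recorded the result is immediate.
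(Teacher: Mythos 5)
Your proposal is correct and matches the paper's proof, which likewise cites Propositions \ref{prop.gwa} and \ref{prop.pwd}; you have merely spelled out the intermediate check that $R=\bigoplus_{i\in Q_0}\kk[x_i,y_i]$ is a PWD, which the paper leaves to the remark that a direct sum of domains is a piecewise domain. No issues.
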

\begin{proof}
This follows from Propositions \ref{prop.gwa} and \ref{prop.pwd}.
\end{proof}

\section{Skew group algebras and twisted CY algebras}
\label{sec.skgrp}

In this section we establish that the $\cH(\alpha,\beta,0)$ are twisted graded Calabi--Yau. The \emph{enveloping algebra} of an algebra $A$ is 
defined as $A^e = A \otimes_\kk A^{\text{op}}$. 

\begin{definition}\label{defn.twCY}
An algebra $A$ is \emph{homologically smooth} if $A$ has a finite-length resolution by finitely generated projective $A^e$-modules. The algebra $A$ is \emph{twisted Calabi--Yau} of dimension $d$ if it is homologically smooth and there exists an invertible $(A,A)$-bimodule $U$ such that there are right $A^e$-module isomorphisms 
\[
\Ext_{A^e}^i(A,A^e) \iso \begin{cases}
    0 & i \neq d \\
    U & i=d.
\end{cases}
\]
If $U = {}^1A^\mu$ for an automorphism $\mu$ of $A$, then we call $\mu$ the \emph{Nakayama automorphism}.
\end{definition}

We will use $\mu^A$ to denote the Nakayama automorphism of the algebra $A$, and we will omit the superscript when the algebra is clear. Note that the Nakayama is unique up to composition with an inner automorphism. That is, ${}^1A^\mu \iso {}^1A^\nu$ if and only if $\mu\nu\inv$ is an inner automorphism \cite[Lemma 1.7(d)]{RRZ}.

One can also make appropriate adjustments to the above definition to obtain the definition of twisted \emph{graded} Calabi--Yau algebras. For example, one can insist that the resolution is by \emph{graded} $A^e$-modules. By \cite[Theorem 4.2]{RR2}, a graded algebra $A$ is twisted graded Calabi--Yau if and only if it is twisted Calabi--Yau. In this setting the Nakayama automorphism $\mu^A$ is necessarily graded, and we use the notation $\mu_0^A$ to denote the restriction of $\mu^A$ to $A_0$. A connected graded algebra $A$ is twisted Calabi--Yau if and only if it is Artin--Schelter regular \cite[Lemma 1.2]{RRZ}.

\subsection{Superpotential algebras}

Let $\eta$ be an automorphism of $\kk Q$. An \emph{$\eta$-twisted superpotential} of $\kk Q$ is a linear combination of paths of length $d$ that is invariant under the linear map sending the path $a_1 a_2 \cdots a_d$ to $(-1)^{d+1} \eta(a_d)a_1 \cdots a_{d-1}$. In the name of efficiency, we will often use the following shorthand to write superpotentials. Let $p=a_1 a_2 \cdots a_d$ be a summand of a superpotential $\omega$. Let $S$ denote the set of signed $\eta$-twists of $p$. Then the \emph{compact form} of $p$ is $[p]=\sum_{s \in S} s$.

For each $a \in Q_1$, there is a (linear) map $\delta_a$ sending the path $a_1 a_2 \cdots a_d$ to $a_2 \cdots a_d$ if $a=a_1$ and $0$ otherwise. Given an $\eta$-twisted superpotential $\omega$ and an integer $k \geq 0$, the corresponding \emph{derivation-quotient algebra (of length 1)} is $\kk Q/I$ where $I = (\partial_a \omega : a \in Q_1)$. Hence, $I$ is generated by relations of the same degree $r$. Let $A=\kk Q/I$ be twisted graded Calabi--Yau of dimension three. By \cite[Theorem 6.8, Remark 6.9]{BSW}, $A$ is a derivation-quotient algebra for some $\eta$-twisted superpotential $\omega$.

The following is easy to verify.

\begin{lemma}\label{lem.CY}
Suppose $\beta_i \neq 0$ for all $i\in Q_0$ and let $Q$ be as in \eqref{eq.dblA}.
Define $\eta \in \Aut_{\gr}(\kk Q)$ by
$\eta(u_i)=\beta_{i-1} u_i$ and $\eta(d_i)=\beta_{i-1} d_i$ for all $i\in Q_0$.
Then the algebra $\cH = \cH(\alpha,\beta,0)$ is the derivation-quotient algebra on $\kk Q$ with $\eta$-twisted superpotential
\[ \Omega = \sum_{i\in Q_0} [d_i d_{i-1} u_{i-1} u_i] - \alpha_i [d_i u_i d_i u_i].\]
\end{lemma}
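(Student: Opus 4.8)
The plan is to verify directly that $\Omega$ is an $\eta$-twisted superpotential and then compute the partial derivatives $\partial_a \Omega$ for each arrow $a \in Q_1$, checking that the resulting ideal coincides with the defining relations of $\cH(\alpha,\beta,0)$ from Definition~\ref{defn.qdu}. Since the lemma is asserted to be ``easy to verify,'' the work is genuinely computational rather than conceptual, so the strategy is to organize the computation cleanly rather than to find a clever argument.

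First I would confirm that $\Omega$ really is $\eta$-twisted, i.e.\ invariant under the map $a_1 a_2 a_3 a_4 \mapsto (-1)^{4+1}\eta(a_4)a_1 a_2 a_3 = -\eta(a_4)a_1a_2a_3$ (here $d=4$). Using the compact-form convention, it suffices to check that each summand's cyclic-twist orbit is internally consistent and that the two families of terms, $[d_i d_{i-1}u_{i-1}u_i]$ and $-\alpha_i[d_i u_i d_i u_i]$, are each closed under the signed twist. For the first family, applying the twist to $d_i d_{i-1}u_{i-1}u_i$ produces $-\eta(u_i)d_i d_{i-1}u_{i-1} = -\beta_{i-1}u_i d_i d_{i-1}u_{i-1}$, and iterating four times must return to a scalar multiple of the original path; the scalar accumulated around the cycle will be $\beta_{i-1}\beta_{i-2}\cdots$ times the sign $(-1)^4$, and I would check this product returns to $1$ appropriately across the index shifts (this bookkeeping of which $\beta$'s appear is where a sign or index error is most likely to creep in). For the second family $[d_i u_i d_i u_i]$, the same check applies with the added subtlety that this path has a nontrivial cyclic symmetry of order two.

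Next I would compute $\partial_{d_i}\Omega$ and $\partial_{u_i}\Omega$. The operator $\delta_a$ (the paper's notation; I would use $\partial_a$ as in the text) extracts the suffix of any path beginning with $a$, summed over the twist orbit so that each arrow appears as a leading letter in some twist of each summand. For $\partial_{d_i}$, the relevant leading-$d_i$ twists of $[d_i d_{i-1}u_{i-1}u_i]$ and of the adjacent summand $[d_{i+1}d_i u_i u_{i+1}]$ (whose twists also begin with $d_i$), together with the $\alpha$-terms, should assemble into exactly $d_i d_{i-1}u_{i-1} - \alpha_i d_i u_i d_i - \beta_i u_{i+1}d_{i+1}d_i$, matching the second relation in Definition~\ref{defn.qdu} up to the overall normalization supplied by the $\beta$-weights in $\eta$. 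Symmetrically, $\partial_{u_i}$ should recover the first relation $d_{i-1}u_{i-1}u_i - \alpha_i u_i d_i u_i - \beta_i u_i u_{i+1}d_{i+1}$.

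\textbf{The main obstacle} I anticipate is not any single hard step but rather the careful accounting of \emph{which} twists of \emph{which} summands contribute a given leading arrow, and the precise powers of the $\beta_j$ that the twist automorphism $\eta$ injects into each contribution. The definition of $\eta$ via $\eta(u_i)=\beta_{i-1}u_i$ is calibrated exactly so that these scalars cancel and reproduce the coefficients $\alpha_i,\beta_i,1$ of the stated relations; verifying that this calibration is correct for every arrow $d_i$ and $u_i$ simultaneously, with the index of $\beta$ shifting as one cycles around the path, is the delicate part. I would therefore carry out the derivative computation for a single representative $d_i$ in full detail, extract the pattern of $\beta$-indices, and then argue by the evident symmetry of $\Omega$ under the $u \leftrightarrow d$ interchange (reflecting the duality between the two relation families) that the $u_i$ computation follows identically, thereby confirming that the derivation-quotient ideal equals the defining ideal of $\cH(\alpha,\beta,0)$.
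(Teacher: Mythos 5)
Your plan --- verify twist-invariance of $\Omega$ and then check that the $\partial_{u_i}\Omega$, $\partial_{d_i}\Omega$ reproduce the defining relations --- is the only available strategy (the paper offers no written proof), and you have correctly located the delicate point. But the proposal defers exactly the computation on which the lemma stands or falls, and asserts without checking that ``the definition of $\eta$ \ldots is calibrated exactly so that these scalars cancel.'' If you actually carry out the check you flagged, it fails with the $\eta$ as stated. Going once around the twist-orbit of $p=d_id_{i-1}u_{i-1}u_i$ multiplies $p$ by
\[
(-1)^4\,\eta_{u_i}\,\eta_{u_{i-1}}\,\eta_{d_{i-1}}\,\eta_{d_i},
\]
where $\eta(a)=\eta_a a$; with $\eta_{u_j}=\eta_{d_j}=\beta_{j-1}$ this is $\beta_{i-1}^2\beta_{i-2}^2$, and for $d_iu_id_iu_i$ one gets $\beta_{i-1}^4$. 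Since these are not $1$ in general, the set of signed $\eta$-twists of $p$ is infinite, so the compact forms $[p]$ are not well-defined finite sums and $\Omega$ cannot be invariant under the twist map. The general constraint forced by invariance (together with $\partial_{d_i}\Omega$ and $\partial_{u_i}\Omega$ being proportional to the two relations at $i$) is $\eta_{u_i}\eta_{d_i}=1$ for every $i$; the calibration that actually works is $\eta(u_i)=\beta_{i-1}u_i$ and $\eta(d_i)=\beta_{i-1}\inv d_i$. With that choice one finds
\[
[d_id_{i-1}u_{i-1}u_i]=d_id_{i-1}u_{i-1}u_i-\beta_{i-1}\,u_id_id_{i-1}u_{i-1}+\beta_{i-1}\beta_{i-2}\,u_{i-1}u_id_id_{i-1}-\beta_{i-1}\,d_{i-1}u_{i-1}u_id_i,
\]
and then $\partial_{d_i}\Omega=d_{i-1}u_{i-1}u_i-\alpha_iu_id_iu_i-\beta_iu_iu_{i+1}d_{i+1}$ and $\partial_{u_i}\Omega=-\beta_{i-1}(d_id_{i-1}u_{i-1}-\alpha_id_iu_id_i-\beta_iu_{i+1}d_{i+1}d_i)$, exactly the relations of Definition~\ref{defn.qdu}. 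So a faithful execution of your plan should end by reporting that the lemma holds only after replacing $\eta(d_i)=\beta_{i-1}d_i$ by $\eta(d_i)=\beta_{i-1}\inv d_i$ (with the corresponding change propagating to the Nakayama automorphism $\mu=-\eta\inv$ in Theorem~\ref{thm.CY}), not by confirming the statement as written.

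A second, related weakness: you propose to do the $d_i$-derivative in detail and then invoke ``the evident symmetry of $\Omega$ under the $u\leftrightarrow d$ interchange'' to get the $u_i$-derivative for free. That symmetry is precisely what the correct twist breaks --- $u_i$ and $d_i$ carry inverse scalars, and $\partial_{u_i}\Omega$ comes out proportional to the relation with factor $-\beta_{i-1}$ rather than $1$ --- so leaning on it would mask the very discrepancy the computation is supposed to detect. Do both families of derivatives explicitly.
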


\subsection{Skew group algebras}

Skew group algebras over Artin--Schelter regular algebras provide a plethora of examples of twisted graded Calabi--Yau algebras.

Let $R$ be a ring and $G$ a finite group action on $R$ via automorphism. As a vector space, the \emph{skew group ring} $R\# \kk G$ is $R \otimes \kk G$ and multiplication is defined as 
\[ (r \# g)(s \# h) = r g(s) \# gh,\]
for all $r\# g, s\# h \in R\# \kk G$, extended linearly.

Let $R$ be a twisted graded Calabi--Yau of dimension $d$, generated by $R_1$. Let $G$ be a finite group acting homogeneously on $R$. Then $A=R\# \kk G$ is also twisted graded Calabi--Yau of dimension $d$, generated by $A_0$ and $A_1$ \cite[Theorem 4.1(c)]{RRZ}. Let $e \in A_0$ be a full idempotent such that $B=eAe$ is graded elementary with $B_0=\kk^m$ for some $m$. Then $B$ is Morita equivalent to $A$ and so $B$ is also twisted graded Calabi--Yau of dimension $d$ \cite[Theorem 4.3, Lemma 6.2]{RR1}. Furthermore, $B \iso \kk Q/I$ where $Q$ is the McKay quiver of the action of $G$ on $R_1$ and $I \in \kk Q_{\geq 2}$. The method of computing $Q$ and $I$ is well-known (see e.g., \cite{BSW,RR1}).

Now let $R = \kk\langle u,d \mid d^2u+ud^2, du^2+u^2d \rangle$. The algebra $R$ is a graded down-up algebra, which is Artin--Schelter regular (and Calabi--Yau) of global dimension three. In particular, $R$ is a derivation-quotient algebra with potential $\Omega = [d^2u^2]$ {and $\eta=\id_R$}.  Choose $n \geq 2$ and let $G=\ZZ_n=\langle g \rangle$ act on $R$ by $g(u)=\xi u$ and $g(d)=\xi\inv d$, where $\xi$ is a primitive $n$th root of unity. It is easy to see that the McKay quiver corresponding to the group action of $G$ on $R_1$ is the adjacency matrix $M$ of the quiver $Q$ as in \eqref{eq.dblA}.

In $R\#\kk G$, define the orthogonal idempotents $f_i = \frac{1}{n} \sum_{j=0}^{n-1} \xi^{ij} \# g^j$, so that $f_0 + \cdots + f_{n-1}=1$ is a full idempotent,
and set $B=R\#\kk G$.
Then $B$ is graded Calabi--Yau of dimension three. Since $g(\Omega) = \Omega$, then the \emph{homological determinant} of the $g$-action is trivial \cite[Theorem 3.3]{MS}, and so $\mu^B$ acts trivially on $\kk Q_0$ (see \cite[Section 2.1]{GR1}). 
It follows from \cite[Theorem 5.2]{craw2} that $B$ is a derivation-quotient algebra (with degree four superpotential) and it is not difficult to see from this that $B$ is a quiver down-up algebra. We will provide an explicit proof in this case since it will be useful to trace through some of the details.

\begin{lemma}\label{lem.Bhilb}
Let $B$ be as above. Then
$h_B = (I - Mt + It^2)\inv(I-I t^2)\inv$
and
$h_B^{\tot} = n(1-t)^{-2} (1-t^2)\inv$.
\end{lemma}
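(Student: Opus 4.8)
The plan is to compute the matrix-valued Hilbert series $h_B$ directly from the skew group algebra structure, then derive the total Hilbert series by summing entries. First I would recall that $B = f(R\#\kk G)f$ is Morita equivalent to $A = R\#\kk G$, and that Hilbert series of the corner algebra can be read off from the $G$-isotypic decomposition of $R$. Concretely, since $R$ is graded Calabi--Yau of dimension three, it is Artin--Schelter regular with $h_R = (1-t)^{-2}(1+t)^{-1} = (1-t)^{-2}(1-t^2)^{-1}(1-t)$... more usefully, the graded down-up algebra $R$ has Hilbert series $h_R(t) = \bigl((1-t)^2(1-t^2)\bigr)^{-1}$, matching the count of monomials $u^a(du)^b d^c$ analogous to Corollary \ref{cor.hilb} with a single vertex.

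The key computation is to track how the $\ZZ_n$-action refines the grading. Under $g(u)=\xi u$ and $g(d)=\xi\inv d$, the group acts on $R_1 = \kk u \oplus \kk d$ with weights $+1$ and $-1$. The idempotents $f_i$ project onto the isotypic component where $g$ acts by $\xi^i$. The $(i,j)$-entry $(H_k)_{ij}$ of the matrix coefficient then counts degree-$k$ elements of $R$ lying in the appropriate isotypic component, which is exactly the number of paths of length $k$ from $e_i$ to $e_j$ in the McKay quiver. I would argue that this bookkeeping is precisely encoded by the factor $(I - Mt + It^2)^{-1}$, which is the matrix-valued Hilbert series of the dimension-two preprojective algebra $A$ of Example \ref{ex.preproj} (via $h_A = (I-Mt+It^2)^{-1}$, using \cite[Theorem 1.2]{RR1}), and that passing from the dimension-two situation to our dimension-three $R$ multiplies by the scalar factor $(1-t^2)^{-1}$ accounting for the extra central-type variable of degree two. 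Thus $h_B = (I-Mt+It^2)^{-1}(1-t^2)^{-1}$.

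For the total Hilbert series I would take $h_B^{\tot}$ to be the sum of all entries of the matrix coefficients. Summing the entries of $(I-Mt+It^2)^{-1}$ amounts to evaluating on the all-ones vector; since $M$ is the adjacency matrix of the doubled cycle $Q$, each row of $M$ has entry-sum $2$, so $(I-Mt+It^2)$ sends the all-ones vector $\mathbf{1}$ to $(1-2t+t^2)\mathbf{1} = (1-t)^2\mathbf{1}$. Hence $(I-Mt+It^2)^{-1}\mathbf{1} = (1-t)^{-2}\mathbf{1}$, and summing over all $n$ vertices gives total $n(1-t)^{-2}$. Multiplying by the scalar $(1-t^2)^{-1}$ yields $h_B^{\tot} = n(1-t)^{-2}(1-t^2)^{-1}$, in agreement with Corollary \ref{cor.hilb}.

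The main obstacle I anticipate is justifying rigorously that the scalar factor is exactly $(1-t^2)^{-1}$ and that it commutes past the matrix factor. The cleanest route is to invoke the relationship between $R$ and the dimension-two preprojective algebra: the degree-two element governing the extra factor is the normal/central element whose quotient recovers the lower-dimensional Calabi--Yau algebra. I would make this precise either by a direct basis count — exhibiting a basis of $R$ as (preprojective-type monomials) times powers of a degree-two element, compatible with the $G$-action so that the matrix structure is preserved — or by citing that for the skew group algebra of an AS-regular algebra of the given type $(M,I,4)$, the matrix Hilbert series factors as the preprojective series times the correction coming from the degree profile of the relations. Verifying this factorization at the level of graded $G$-modules, rather than merely numerically, is the delicate step; everything else is routine bookkeeping with geometric series.
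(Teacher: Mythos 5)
Your proposal reaches the right formulas but by a genuinely different route from the paper. The paper's proof is essentially a citation: having established in the preceding paragraph that $B$ is twisted graded Calabi--Yau of dimension three of type $(M,I,4)$ with $M=M^T$, it invokes \cite[Theorem 1.3]{RR1} to get $h_B=(I-Mt+Mt^3-It^4)\inv$ and then simply factors the polynomial as $(1-t^2)(I-Mt+It^2)$; the total series is read off from Example \ref{ex.preproj}. You instead propose to compute $h_B$ directly from the skew group algebra structure: $\dim f_iB_kf_j$ equals the dimension of the weight-$(j-i)$ isotypic component of $R_k$ under the $\ZZ_n$-action, and this count factors as (preprojective count)$\times(1-t^2)\inv$. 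This is a legitimate, more hands-on argument, and it has the virtue of not presupposing the type-$(M,I,4)$ machinery; your all-ones-vector computation of the total series (each row of $M$ sums to $2$, so $(I-Mt+It^2)\inv\mathbf{1}=(1-t)^{-2}\mathbf{1}$) is also cleaner than the paper's, which in fact contains typos at exactly that step. The cost is that you must actually verify the factorization you flag as ``the delicate step.'' That verification is easy and you should carry it out rather than offer it as one of two options: the graded down-up algebra $R$ has $\kk$-basis $\{u^a(du)^bd^c\}$ (Benkart--Roby, or Proposition \ref{prop.basis} with $n=1$), the element $du$ has degree $2$ and $G$-weight $0$, and $u^a d^c$ has weight $a-c$; hence the weight-$w$ part of $R_k$ is $\sum_{b\ge 0}\#\{(a,c):a+c=k-2b,\ a-c\equiv w \bmod n\}$, which is precisely the coefficient of $t^k$ in the $(i,i+w)$ entry of $(I-Mt+It^2)\inv(1-t^2)\inv$ by the path count of Example \ref{ex.preproj}. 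Your second suggested route (citing the factorization for algebras of type $(M,I,4)$) is just the paper's argument in disguise, so commit to the basis count. One small slip to delete: the first displayed guess $h_R=(1-t)^{-2}(1+t)^{-1}$ is wrong (that is the quadratic AS-regular series); your corrected $h_R=\bigl((1-t)^2(1-t^2)\bigr)^{-1}$ is the right one for the cubic down-up algebra.
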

\begin{proof}
Since $B=\kk Q/I$ is twisted graded Calabi--Yau of global dimension three,
then by \cite[Theorem 1.3]{RR1},
\[ h_B = I-Mt+Mt^3-Pt^4\]
where $M$ is the adjacency matrix of $Q$ and $P$ is the permutation matrix corresponding to the Nakayama automorphism $\mu^B$. By the above discussion, $Q$ is as in \eqref{eq.dblA} and $P=I$. Thus, $h_B=(p(t))\inv$ where
\[ 
    p(t) = I-Mt+Mt^3-It^4 
        = (1-t^4)I - (t-t^3)M
        = (1-It^2)(I - Mt + It^2).
\]
It suffices to show that the matrix-valued Hilbert series corresponds to the total Hilbert series as stated.
We note that $(I - Mt + It^2)\inv$ is the Hilbert series of the algebra $A$ in Example \eqref{ex.preproj}. By that example, the total Hilbert series of $A$ is $n(1-t^2)\inv$. It follows that $h_B^{\tot} = n(1-t^2)\inv (1-t^2)\inv$.
\end{proof}

\begin{lemma}\label{lem.skgrp}
Keep the above setup. Set $\beta_i=1$ for all $i\in Q_0$ and let $\cH=\cH(0,\beta,0)$. Then $B \iso \cH$. 
\end{lemma}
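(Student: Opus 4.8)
The plan is to present $B$ explicitly as a path algebra modulo relations and match it with $\cH = \cH(0,\beta,0)$, $\beta_i = 1$. As recalled above, $B = f(R\#\kk G)f$ is graded elementary with $B_0 = \kk^n$ and McKay quiver $Q$ from \eqref{eq.dblA}, so $B \iso \kk Q/I$ with $I \subset \kk Q_{\geq 2}$, and $B$ is twisted graded Calabi--Yau of dimension three of type $(M,I,4)$. First I would pin down the arrows. From $g(u)=\xi u$ and $g(d)=\xi\inv d$ one gets $(1\# g)(u\#1)=\xi(u\#1)(1\# g)$ and $(1\# g)(d\#1)=\xi\inv(d\#1)(1\# g)$, and hence $f_i u = u f_{i+1}$ and $f_i d = d f_{i-1}$ (writing $u = u\#1$, $d = d\#1$). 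Therefore $u_i := f_i u f_{i+1} = f_i u$ and $d_i := f_{i+1} d f_i = f_{i+1}d$ are nonzero, span $B_1$, and correspond bijectively to the arrows $u_i\colon i \to i+1$ and $d_i\colon i+1 \to i$ of $Q$.

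Next I would compute the relations by transporting the defining relations of $R$ through the idempotents. Using $f_i u = u f_{i+1}$ and $f_i d = d f_{i-1}$ repeatedly, every length-three monomial in the $u_j, d_j$ collapses to the form $f_i w f_{i+1}$ (resp. $f_{i+1} w f_i$) for a word $w$ of length three in $u,d$; for instance $d_{i-1}u_{i-1}u_i = f_i (du^2) f_{i+1}$ and $u_i u_{i+1} d_{i+1} = f_i (u^2 d) f_{i+1}$, and similarly $d_i d_{i-1}u_{i-1} = f_{i+1}(d^2u)f_i$ and $u_{i+1}d_{i+1}d_i = f_{i+1}(ud^2)f_i$. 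The two relations of $R$ then force, for every $i$, a quiver down-up relation with $\alpha_i = \gamma_i = 0$ and a single nonzero structure constant that is uniform in $i$; rescaling the generators $u_i \mapsto \lambda_i u_i$, $d_i \mapsto \mu_i d_i$ by suitable scalars normalizes this constant to $\beta_i = 1$, producing exactly the defining relations of $\cH(0,\beta,0)$. Conceptually this is cleanest seen through superpotentials: since $g(\Omega)=\Omega$ for $\Omega = [d^2u^2]$, decomposing $\Omega$ along the idempotents $f_i$ yields the $\id$-twisted superpotential $\sum_{i\in Q_0}[d_i d_{i-1}u_{i-1}u_i]$ on $\kk Q$, which is precisely the superpotential of $\cH(0,\beta,0)$ identified in Lemma \ref{lem.CY}; hence $B$ and $\cH$ are the derivation-quotient algebras of one and the same superpotential.

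With the relations in hand I would define a graded algebra homomorphism $\theta\colon \cH(0,\beta,0)\to B$ sending each abstract $u_i, d_i$ to the (rescaled) arrows above. By the previous step $\theta$ is well defined, and it is surjective because the $u_i, d_i$ together with the idempotents generate $B$. To upgrade $\theta$ to an isomorphism I would compare total Hilbert series: by Corollary \ref{cor.hilb} we have $h_{\cH}^{\tot} = n(1-t)^{-2}(1-t^2)\inv$, and by Lemma \ref{lem.Bhilb} the same expression equals $h_B^{\tot}$. Since both algebras are $\NN$-graded and locally finite and $\theta$ is a graded surjection with equal dimensions in every degree, it is an isomorphism.

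The main obstacle is the parameter matching in the middle step: verifying that the relations transported from $R$ land on the quiver down-up form with $\alpha_i = \gamma_i = 0$ and $\beta_i$ constant, and then confirming that the rescaling normalizing $\beta_i$ to $1$ can be chosen consistently around the cycle. The superpotential computation is what makes this transparent, as it directly exhibits the correct twist ($\eta = \id$) and the uniform structure constants; the Hilbert-series comparison is then what guarantees that $I$ is generated in degree three by exactly these relations, with no further hidden relations, so that the surjection $\theta$ is forced to be injective.
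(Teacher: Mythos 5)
Your overall strategy is the same as the paper's: you take the same generators $U_i=f_i(u\#f)$ and $D_i=(d\#f)f_i$, push the two relations of $R$ through the idempotents to obtain quiver down-up relations, get a surjection $\cH\to B$, and conclude injectivity from the equality of total Hilbert series (Corollary \ref{cor.hilb} versus Lemma \ref{lem.Bhilb}). That skeleton, including the Hilbert-series endgame, is exactly the paper's argument.

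The gap is the step you yourself call ``the main obstacle,'' and it does not go away. Transporting $du^2+u^2d=0$ and $d^2u+ud^2=0$ through the $f_i$ gives $D_{i-1}U_{i-1}U_i+U_iU_{i+1}D_{i+1}=0$ and $D_iD_{i-1}U_{i-1}+U_{i+1}D_{i+1}D_i=0$, i.e.\ the quiver down-up relations with $\beta_i=-1$, not $+1$. A rescaling $u_i\mapsto\lambda_iu_i$, $d_i\mapsto\mu_id_i$ multiplies $\beta_i$ by $c_{i+1}c_{i-1}\inv$ with $c_i=\lambda_i\mu_i$, hence preserves $\prod_i\beta_i$; since $\prod_i(-1)=(-1)^n\neq 1$ when $n$ is odd, the normalization to $\beta_i=1$ that you invoke does not exist in general. (This is consistent with Theorem \ref{thm.CY}: $B$ is honestly Calabi--Yau, while $\cH(0,\mathbf 1,0)$ has Nakayama automorphism $u_i\mapsto-u_i$, which is not inner for odd $n$.) The honest conclusion of your computation is $B\iso\cH(0,\beta,0)$ with $\beta_i=-1$ for all $i$ --- which is also precisely what the paper's displayed computation verifies, the sign in the statement notwithstanding; so you should either carry the $-1$ through or record it and stop, rather than assert a normalization. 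Your superpotential gloss has the same sign slip: $R=\kk\langle u,d\mid d^2u+ud^2,\,du^2+u^2d\rangle$ is the derivation-quotient algebra of $[d^2u^2]$ for the twist $\eta(u)=-u$, $\eta(d)=-d$ (with $\eta=\id$ one gets the relations $d^2u-ud^2$ and $du^2-u^2d$), and decomposing along the $f_i$ lands on the superpotential of Lemma \ref{lem.CY} with $\beta_i=-1$. Moreover, the assertion that $B$ and $\cH$ are derivation-quotient algebras of ``one and the same'' superpotential quietly uses that the superpotential of $f(R\#\kk G)f$ is the idempotent decomposition of that of $R$; this is true but needs justification or a citation, whereas the direct relation check you also outline makes it unnecessary.
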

\begin{proof}
In $B$, set $U_i = f_i (u \# f)$ and $D_i = (d \# f)f_i$ for all $i\in Q_0$. A calculation shows
\[
    U_i = f_i (u\# f) 
        = \left(\frac{1}{n} \sum_{j=0}^{n-1} \xi^{ij} \# g^j\right)(u \# f) 
        = \frac{1}{n} \sum_{j=0}^{n-1} \xi^{ij}(\xi^j u) \# g^j
        = (u\# f)\left(\frac{1}{n} \sum_{j=0}^{n-1} \xi^{(i+1)j} \# g^j\right)
        = (u\#f) f_{i+1}.
\]
Similarly, $D_i = (d \# f)f_i = f_{i+1}(d \# f)$. Now
\begin{align*}
	D_{i-1}U_{i-1}U_i + U_iU_{i+1}D_{i+1}  
        &= (d\#f)f_{i-1} f_{i-1}(u\#f) f_i (u\#f) + f_i(u\# f)f_{i+1}(u\# f)f_{i+2}(d\# f) \\
        &= f_i \left( (d\#f)(u\#f)(u\#f) + (u\# f)(u\# f)(d\# f) \right) \\
        &= f_i (du^2+u^2d) \# f = 0, \\
	D_iD_{i-1}U_{i-1} + U_{i+1}D_{i+1}D_i
        &= (d\# f)f_i (d\#f)f_{i-1} f_{i-1}(u\# f) + f_{i+1}(u\# f) (d\# f)f_{i+1} (d\# f) f_i \\
        &= f_{i+1} \left( (d\# f)(d\# f)(u\# f) + (u\# f)(d\# f)(d\# f) \right) \\
        &= f_{i+1} (du^2+u^2d) \# f = 0.
\end{align*}
Thus, there is a surjective homomorphism $\phi:\cH \to B$ given by $e_i \mapsto f_i$, $u_i \mapsto U_i$, and $d_i \mapsto D_i$ for all $i\in Q_0$. Since $h_{\cH}^{\tot}=h_{B}^{\tot}$ (Corollary \ref{cor.hilb} and Lemma \ref{lem.Bhilb}, respectively), then  $\phi$ is an isomorphism.
\end{proof}

\begin{lemma}\label{lem.hilb}
Let $M$ be the adjacency matrix for $Q$ as in \eqref{eq.dblA}.
Let $\alpha,\beta \in \kk^n$ and set $\cH=\cH(\alpha,\beta,0)$. Then
\[ h_{\cH} = (I-Mt+Mt^3-It^4)\inv = (I-Mt+It^2)\inv (1-I t^2)\inv.\]
\end{lemma}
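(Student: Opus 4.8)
The plan is to reduce the computation, for arbitrary parameters, to the single case already settled by the skew-group-ring construction, exploiting the fact that the matrix-valued Hilbert series is a combinatorial invariant of the monomial basis and hence insensitive to $\alpha$ and $\beta$.

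First I would note that Proposition \ref{prop.basis} produces the \emph{same} set of normal-form paths \eqref{eq.Aform} as a $\kk$-basis of $\cH(\alpha,\beta,0)$ for \emph{every} choice of $\alpha,\beta\in\kk^n$; indeed the leading terms $d_id_{i-1}u_{i-1}$ and $d_{i-1}u_{i-1}u_i$ are independent of the parameters, and the overlap ambiguities resolve for all parameter values. Each basis path in \eqref{eq.Aform} carries a well-defined source, target, and length, so $(H_k)_{ij}=\dim_\kk e_i\cH_k e_j$ simply counts such paths of length $k$ from $e_i$ to $e_j$ and is therefore the same for all $\alpha,\beta$. Consequently $h_{\cH(\alpha,\beta,0)}=h_{\cH(0,\mathbf 1,0)}$ with $\mathbf 1=(1,\dots,1)$, and it suffices to evaluate the right-hand side.

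Next I would invoke Lemma \ref{lem.skgrp}, which supplies a graded isomorphism $\phi\colon\cH(0,\mathbf 1,0)\to B$ with $\phi(e_i)=f_i$. Since $\phi$ is graded and matches the labeled idempotents, it restricts to isomorphisms $e_i\cH_k e_j\iso f_iB_kf_j$, so the matrix-valued Hilbert series coincide, $h_{\cH(0,\mathbf 1,0)}=h_B$. Lemma \ref{lem.Bhilb} then gives $h_B=(I-Mt+It^2)\inv(1-t^2)\inv$, and its proof records the factorization $I-Mt+Mt^3-It^4=(1-t^2)(I-Mt+It^2)$; inverting this identity (the scalar factor $(1-t^2)$ is central) yields both displayed forms of $h_\cH$ at once.

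The point needing care---and what I would flag as the crux---is that I must track the \emph{matrix-valued} series rather than the total series of Corollary \ref{cor.hilb}: this is why I record source and target of each path in \eqref{eq.Aform} and why I need $\phi$ to respect the labeling $e_i\mapsto f_i$ rather than to be a bare algebra isomorphism. A self-contained alternative would bypass $B$ altogether by reading off directly from \eqref{eq.Aform} that a basis path with source $e_i$ has target $e_{i+k-\ell}$ (indices modulo $n$) with length determined by $j,k,\ell$, and then summing the resulting matrix geometric series to obtain $(I-Mt+It^2)\inv(1-t^2)\inv$; I would nonetheless keep the skew-group-ring argument as the primary proof, since Lemmas \ref{lem.skgrp} and \ref{lem.Bhilb} make every needed ingredient immediately available.
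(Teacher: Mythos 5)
Your proposal is correct and follows essentially the same route as the paper: the paper likewise obtains $h_B=(I-Mt+Mt^3-It^4)\inv$ from the type $(M,I,4)$ Calabi--Yau structure, transfers it to $\cH(0,\mathbf 1,0)$ via Lemma \ref{lem.skgrp}, and then observes that the basis of Proposition \ref{prop.basis} is independent of $\alpha$ and $\beta$. Your added care about tracking sources and targets so that the \emph{matrix-valued} (not just total) series transfers across the vertex-preserving isomorphism is a worthwhile point the paper leaves implicit, but it does not change the argument.
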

\begin{proof}
Since $B$ is graded Calabi--Yau of dimension three and $\mu^B$ acts trivially on $\kk Q_0$, 
then by \cite[Theorem 1.3]{RR1} its Hilbert series is $h_{B} = (I-Mt+Mt^3-It^4)\inv$. Thus, by Lemma \ref{lem.skgrp}, the result holds when $\alpha_i=0$ and $\beta_i=1$ for all $i$. However, the $\kk$-basis in Proposition \eqref{prop.basis} does not depend on the choice of $\alpha$ and $\beta$.
\end{proof}

We are now (almost) ready to prove that the $\cH(\alpha,\beta,0)$ are twisted graded Calabi--Yau. We first need one additional definition.

Let $A$ be a locally finite graded algebra. The \emph{graded left socle} of $A$ is $\soc_l(A) = \{ x \in A \mid J(A)x=0\}$ where $J(A)$ is the graded Jacobson radical. Similarly, $\soc_r(A) = \{ x \in A \mid xJ(A)=0\}$ is the \emph{graded right socle} of $A$. If $A=\kk Q/I$ for a homogeneous ideal $I$, then $J(A)$ is the arrow ideal generated by $Q_1$.

\begin{theorem}\label{thm.CY}
Suppose $\beta_i \neq 0$ for all $i \in Q_0$. Then $\cH=\cH(\alpha,\beta,0)$ is twisted-graded Calabi--Yau of global dimension three with Nakayama automorphism $\mu^{\cH}$ given by $\mu^{\cH}(u_i)=-\beta_{i-1}\inv u_i$ and $\mu^{\cH}(d_i)=-\beta_{i-1}\inv d_i$ for all $i \in Q_0$.
\end{theorem}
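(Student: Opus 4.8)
The plan is to assemble the pieces already established in the excerpt rather than recompute the Calabi--Yau property from scratch. The key observation is that Lemma \ref{lem.CY} exhibits $\cH = \cH(\alpha,\beta,0)$ as the derivation-quotient algebra of the $\eta$-twisted superpotential $\Omega$, and the structural results of \cite{BSW,RR1} tie the Nakayama automorphism of a twisted graded Calabi--Yau derivation-quotient algebra directly to the twist $\eta$. So the real work is (i) to verify that $\cH$ is genuinely twisted graded Calabi--Yau of dimension three, and (ii) to extract the explicit Nakayama automorphism from the data $\eta$ and the superpotential.

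For step (i), I would first reduce to the case $\alpha = 0$, $\beta_i = 1$. In that case Lemma \ref{lem.skgrp} identifies $\cH(0,\beta,0)$ with the skew-group-ring corner $B = f(R \# \kk G)f$, which we have already argued is graded Calabi--Yau of global dimension three of type $(M,I,4)$. For general $\alpha$ and $\beta$ (with all $\beta_i \neq 0$), I would argue that the matrix-valued Hilbert series is independent of the parameters: this is precisely Lemma \ref{lem.hilb}, which holds because the monomial basis of Proposition \ref{prop.basis} does not depend on $\alpha,\beta$. Since $\cH$ is a derivation-quotient algebra of a twisted superpotential (Lemma \ref{lem.CY}) with the same Hilbert series as the known Calabi--Yau member, the Koszul-type / Ginzburg complex associated to $\Omega$ is still a finite-length resolution of $\cH$ by finitely generated graded projective $\cH^e$-modules; exactness can be checked on Hilbert series, which match those of the $\alpha=0$, $\beta_i=1$ case. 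This gives homological smoothness and identifies the single nonvanishing $\Ext^i_{\cH^e}(\cH,\cH^e)$ in degree $i=3$ with an invertible bimodule, establishing twisted graded Calabi--Yau of dimension three.

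For step (ii), the Nakayama automorphism of a derivation-quotient algebra attached to an $\eta$-twisted superpotential is governed by $\eta$ together with the sign $(-1)^{d+1}$ appearing in the twist, where $d=4$ is the length of $\Omega$. Concretely, for a twisted superpotential of degree $d$ with twist $\eta$, the invertible bimodule $U$ is $^1\cH^{\mu}$ with $\mu$ determined (up to inner automorphism) by $\eta$ and the parity of $d$; here $d=4$ is even, contributing the sign $-1$, while $\eta(u_i)=\beta_{i-1}u_i$ and $\eta(d_i)=\beta_{i-1}d_i$ contribute the scalar $\beta_{i-1}$. Combining these yields $\mu^{\cH}(u_i) = -\beta_{i-1}\inv u_i$ and $\mu^{\cH}(d_i) = -\beta_{i-1}\inv d_i$; the inverse on $\beta_{i-1}$ and the fact that $\mu$ is built from $\eta\inv$ rather than $\eta$ is exactly the kind of bookkeeping one must track carefully. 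I would verify the formula by checking directly that the stated $\mu^{\cH}$ acts correctly on the generators of the relation space and preserves the defining relations (which it does, since each relation is homogeneous in the arrows and the scalars $-\beta_{i-1}\inv$ are compatible with the relation structure vertex-by-vertex).

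The main obstacle I anticipate is step (ii): pinning down the precise scalars and making sure the indexing shift (the $i-1$ subscript) and the sign are correct. The existence and dimension are essentially handed to us by the skew-group-ring construction and the Hilbert-series argument, but the exact Nakayama automorphism requires carefully unwinding the conventions of \cite{BSW} relating $\eta$ to $\mu$, including how the twist $\eta$ interacts with the $(-1)^{d+1}$ sign and whether the formula produces $\eta$ or $\eta\inv$ on the nose. I would therefore spend most of the effort reconciling the superpotential twist $\eta$ from Lemma \ref{lem.CY} with the Nakayama automorphism, and then sanity-check the result against the classical single-vertex down-up algebra, where the formula should reduce to the known Nakayama automorphism of the graded down-up algebra.
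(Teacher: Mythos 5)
Your overall route is the same as the paper's: compute the matrix-valued Hilbert series by transporting it from the skew group ring corner $B$ in the case $\alpha=0$, $\beta_i=1$ (Lemmas \ref{lem.skgrp} and \ref{lem.hilb}), observe that the basis of Proposition \ref{prop.basis} is parameter-independent so the Hilbert series is too, and then read off the Nakayama automorphism from the $\eta$-twisted superpotential presentation of Lemma \ref{lem.CY} via \cite[Theorem 6.8]{BSW} as $(-1)^{d+1}\eta\inv$ with $d=4$. Your step (ii) is exactly the paper's argument, including the sign and the inversion of $\eta$ (the paper additionally notes that its path-composition convention is opposite to that of \cite{BSW}, which is the source of the inverse).

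There is one genuine gap, in step (i): the claim that ``exactness can be checked on Hilbert series'' is not sufficient as stated. Matching the Hilbert series of the known Calabi--Yau member only shows that the Euler characteristic of the candidate superpotential complex vanishes; it does not by itself force the complex to be exact (one still needs, in effect, injectivity of the leftmost map, and the self-duality of the complex to propagate exactness to the middle terms). The paper supplies the missing input by showing that $\cH$ is a piecewise domain (Corollary \ref{cor.pwd}), hence has trivial graded left and right socle, and then invokes \cite[Lemma 8.6]{RR1}, whose hypotheses are precisely the two things you have in hand --- a Hilbert series of the form $(I-Mt+M^{T}t^{3}-It^{4})\inv$ with $M=M^{T}$, and trivial socles --- together with the vanishing of the restriction of the relevant data to $Q_0$. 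This is not mere bookkeeping: the algebras with some $\beta_i=0$ have the same Hilbert series (the basis in Proposition \ref{prop.basis} does not depend on $\beta$) yet fail to be piecewise domains and are not noetherian, so some parameter-dependent input beyond the Hilbert series is unavoidable. Adding the PWD/socle step and the citation of \cite[Lemma 8.6]{RR1} (or an equivalent direct exactness argument) closes the gap and recovers the paper's proof.
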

\begin{proof}
Let $M$ be the adjacency matrix for $Q$ as in \eqref{eq.dblA}. By Lemma \ref{lem.hilb}, $h_{\cH} = (I - Mt + M^T t^3 - It^4)\inv$. Since $\cH$ is a PWD by Corollary \ref{cor.pwd}, then it has trivial graded left and right socle. Since the restriction of $\beta$ to $Q_0$ is trivial, then by \cite[Lemma 8.6]{RR1}, $\cH$ is twisted Calabi--Yau of dimension three. 

By \cite[Theorem 6.8]{BSW}, the Nakayama automorphism of $\cH$ is $\mu^{\cH} = (-1)^{d+1}\eta\inv$. Note that our convention on composing paths is opposite from that reference, hence the inverse.
\end{proof}

It follows from Theorem \ref{thm.CY} that $H(\alpha,\beta,0)$ is Calabi--Yau when $\beta_i=-1$ for all $i$. In this case, let $\Omega$ be the potential from Lemma \ref{lem.CY} and let $\Omega' = \gamma_i[u_id_i]$. Then $\Omega + \Omega'$ is an inhomogeneous potential and it follows from \cite[Theorem 3.6]{BT} that $\cH(\alpha,\beta,\gamma)$ is Calabi--Yau. The next result shows that the twisted Calabi--Yau property holds for all of the quiver down-up algebras with $\beta_i \neq 0$ for all $i$.


\begin{corollary}\label{cor.CY}
Suppose $\beta_i \neq 0$ for all $i \in Q_0$. Then $\cH=H(\alpha,\beta,\gamma)$ is twisted Calabi--Yau.
\end{corollary}
\begin{proof}
Let $F$ be the filtration on $Q$ by path length and give $\cH$ the trivial grading. Then the associated graded ring $\gr_F(\cH)$ is isomorphic to $\cH(\alpha,\beta,0)$, which is twisted Calabi--Yau by Theorem \ref{thm.CY}. By \cite[Theorem 4.4]{ZSL}, $\cH$ is twisted Calabi--Yau.
\end{proof}

\section{Main proof}
\label{sec.main}

We need one additional result before we are ready to prove our main theorem. The following is analogous to \cite[Lemma 4.3]{KMP}. Recall that we interpret subscripts modulo $n$.

\begin{lemma}\label{lem.nnoeth}
Let $\cH=\cH(\alpha,\beta,\gamma)$. 
If $\beta_i =0$ for some $i\in Q_0$, then $\cH$ is not noetherian.
\end{lemma}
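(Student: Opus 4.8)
The plan is to show that when $\beta_i = 0$ for some vertex $i$, the algebra $\cH$ contains a subalgebra or homomorphic image that is visibly non-noetherian, by exhibiting an infinitely-generated one-sided ideal. The natural strategy mirrors the single-vertex argument of \cite[Lemma 4.3]{KMP}: when $\beta_i = 0$, the defining relations degenerate so that certain products no longer reduce, and one loses the polynomial-ring structure that Corollary \ref{cor.gwa_props} provided in the non-degenerate case. Concretely, the relation $d_{i-1}u_{i-1}u_i = \alpha_i u_i d_i u_i + \gamma_i u_i$ (with the $\beta_i$-term gone) means the commutation between $u_i d_i$ and $d_{i-1}u_{i-1}$ breaks down; by Corollary \ref{cor.gwa_props}(2) the subalgebra $\kk[u_id_i, d_{i-1}u_{i-1}]$ is polynomial precisely when all $\beta_j \neq 0$, so the failure of that condition at $i$ is what I would exploit.

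First I would isolate the offending vertex and examine the paths supported near $e_i$. Using the basis from Proposition \ref{prop.basis}, I would track the two central-type elements $x_i := u_i d_i$ and $y_i := d_{i-1}u_{i-1}$ living in $e_i \cH e_i$. In the non-degenerate case these commute and are algebraically independent; with $\beta_i = 0$ I expect them either to fail to commute in a controlled way or to satisfy a degenerate relation, and in either event I want to produce an ascending chain of right (or left) ideals that does not stabilize. The cleanest route is probably to build the chain $\sum_{k=0}^{m} y_i^k \cH$ or an analogous chain generated by the images of $u_i, u_i y_i, u_i y_i^2, \ldots$, and show via the explicit basis that each new generator is genuinely needed — i.e.\ that $u_i y_i^{m+1}$ does not lie in the right ideal generated by the lower powers.

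The key steps, in order, are: (1) rewrite the degenerate relations at vertex $i$ explicitly with $\beta_i = 0$ substituted; (2) use Proposition \ref{prop.basis} to give a normal form for the relevant elements of $e_i \cH e_i$ and $e_{i}\cH e_{\ast}$, so that linear independence of candidate generators can be read off directly; (3) construct an explicit infinite ascending chain of one-sided ideals; and (4) verify non-stabilization by a degree/leading-term argument against the normal form. Since left-right symmetry of the relations in Definition \ref{defn.qdu} is manifest, establishing the failure of the right-noetherian property suffices, and the left case follows by the evident anti-automorphism swapping the two families of relations.

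The main obstacle I anticipate is step (4): proving that the chain is strictly ascending requires a precise understanding of how elements of $\cH$ reduce against the leading-term basis of Proposition \ref{prop.basis} once the $\beta_i$-term is absent, and in particular ruling out that some unexpected cancellation or reduction collapses a putatively new generator back into the previous ideal. I would handle this by working with the associated graded or with the leading terms under the monomial ordering $d_0 > \cdots > d_{n-1} > u_0 > \cdots > u_{n-1}$ already used in Proposition \ref{prop.basis}, so that membership questions become comparisons of leading monomials rather than full reductions. A secondary subtlety is that the parameters $\alpha_i, \gamma_i$ still appear in the degenerate relations; I expect these lower-order terms not to interfere with the leading-term analysis, but confirming they cannot be used to telescope the chain is the part that needs genuine care.
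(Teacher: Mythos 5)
Your high-level plan --- produce a strictly ascending chain of one-sided ideals and verify strictness against the normal form of Proposition \ref{prop.basis} --- is exactly the strategy of the paper's proof, but both concrete chains you propose demonstrably stabilize, so there is a genuine gap. Writing $y_i = d_{i-1}u_{i-1} \in e_i\cH e_i$, the chain $\sum_{k=0}^{m} y_i^k\cH$ is constant: the $m=0$ term is already $e_i\cH$, which contains every $y_i^k$. Your alternative generators $u_iy_i^k$ are zero outright, since $u_i$ ends at $e_{i+1}$ while $y_i$ begins at $e_i$; and if one reorders them to $y_i^ku_i$, the degenerate relation $y_iu_i=(\alpha_iu_id_i+\gamma_ie_i)u_i$ gives
\[
y_i^{k+1}u_i \;=\; \alpha_i\,(y_i^ku_i)(d_iu_i) \;+\; \gamma_i\,(y_i^ku_i)\;\in\; y_i^ku_i\,\cH,
\]
so that chain stabilizes after one step. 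In other words, the ``telescoping by the lower-order terms $\alpha_i,\gamma_i$'' that you flag as the delicate point really does occur for your choice of generators; no leading-term analysis can rescue a chain that is literally eventually constant.

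The missing idea is the correct element to translate and the correct thing to translate it by. Set $z=\alpha_iu_id_i+\gamma_ie_i-d_{i-1}u_{i-1}\in e_i\cH e_i$, so that the two degenerate relations become $zu_i=0$ and $d_iz=0$; since $z$ ends at $e_i$, in fact $zu_j=0$ for \emph{every} arrow $u_j$. Let $U=u_iu_{i+1}\cdots u_{i+n-1}$ be the full cycle through all $n$ vertices --- note $u_i^2=0$ once $n\geq 2$, so the single-vertex generators $u^mz$ of \cite[Lemma 4.3]{KMP} must be replaced by $U^mz$; your proposal never identifies this cycle. Because $zu_j=0$, the right ideal $U^mz\cH$ is spanned by the elements $U^mz\,y_i^{\,j}d_{i-1}\cdots d_{i-k}$, and expanding $z$ (using $d_iy_i^{\,j}=(\alpha_id_iu_i+\gamma_i)^jd_i$) shows every element of $I_s=\sum_{m=1}^{s}U^mz\cH$ is supported on basis monomials whose $u$-prefix has length at most $sn+1$. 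Since $U^{s+1}z$ has the basis monomial $U^{s+1}y_i$ (with $u$-prefix of length $(s+1)n$) in its support, it does not lie in $I_s$, and the chain is strictly ascending. Your closing remark is sound: the path-reversing map $e_i\mapsto e_i$, $u_i\mapsto d_i$, $d_i\mapsto u_i$ is an anti-automorphism interchanging the two families of relations, so the left-noetherian failure does follow from the right-handed argument. But without $z$ and the cycle $U$, the proof as proposed does not go through.
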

\begin{proof}
Let $|Q_0| = n$. Suppose $\beta_i = 0$. We claim that $\cH$ is not right noetherian. Since $\cH = \oplus_{j=0}^{n-1} e_j \cH$, it suffices to show that $e_i A$ is not noetherian as a right $\cH$-module.

We generalize the proof of \cite[Lemma 4.3]{KMP}. For any $m \in Q_0$, set $x_m = d_m u_m$. By Proposition~\ref{prop.basis}, $e_i \cH$ has a $\kk$-basis of the form
\begin{equation}\label{eq.udBasis}
(u_i u_{i+1} \cdots u_{i+k})x_{i+k}^j(d_{i+k}d_{i+k-1}\cdots d_{i+k-\ell})
\end{equation}
with $k,j,\ell \geq 0$.

Since $\beta_i = 0$, the relations in Definition~\ref{defn.qdu} yield
\begin{equation}\label{eq.UDkill}
(\alpha_i u_i d_i +\gamma_i - x_{i-1})u_i 
    = d_i(\alpha_i u_i d_i +\gamma_i - x_{i-1}) = 0
\end{equation}
Note that if $u_k \neq u_i$, then
$(\alpha_i u_i d_i - x_{i-1})u_k = 0$ because of the source and target mismatch. 

Set $U= u_i u_{i+1} \cdots u_{i-1+n}$. That is, $U$ is the product of the distinct $u_m$, starting at $u_i$.  Then for $u_k \neq u_i$,
\[
U(\alpha_i u_i d_i +\gamma_i - x_{i-1})u_k = \gamma_i Uu_k = 0.
\]
Thus
\begin{equation}\label{eq.Ukill}
U(\alpha_i u_i d_i +\gamma_i - x_{i-1})u_m = 0 \text{ for all }m \in \ZZ.
\end{equation}
For each $s \geq 1$, define
\[ I_s = \sum_{m=1}^s U^m(\alpha_i u_i d_i + \gamma_i - x_{i-1})A.\]
By \eqref{eq.Ukill}, 
\[ I_s = \sum_{m=1}^s \sum_{j,k \geq 0} \kk U^m (\alpha_i u_i d_i + \gamma_i - x_{i-1}) x_{i-1}^j d_{i-1} d_{i-2} \cdots d_{i-k}.\]

By \eqref{eq.UDkill}, $d_i x_{i-1} = d_i \alpha_i u_i d_i + \gamma_i d_i = (\alpha_i  x_i  + \gamma_i) d_i$ and hence $d_i x_{i-1}^j = (\alpha_i x_i + \gamma_i)^j d_i$ for any $j \geq 0$.
Thus $I_s$ is contained in the $\kk$-span of the basis monomials
\[
U^m x_{i-1}^{j+1} d_{i-1} d_{i-2} \cdots d_{i-k}, \qquad 
U^m u_i x_i^j d_i d_{i-1} d_{i-2} \cdots d_{i-k} 
\]
with $k, j \geq 0$ and $1 \leq m \leq s$. Thus no polynomial in $I_s$ has $U^{s+1} x_{i-1}$ in its support, while $I_{s+1}$ does contain the polynomial $U^{s+1}(\alpha_i u_i d_i + \gamma_i - x_{i-1})$.

Thus $I_s \subsetneq I_{s+1}$ for all 
{$s \geq 1$} and so $e_i \cH$ is not noetherian. So $\cH$ is not right noetherian. A similar proof shows that $\cH$ is not left noetherian.
\end{proof}

\begin{theorem}\label{thm.main}
Let $\alpha,\beta,\gamma \in \kk^n$ and let $\cH=\cH(\alpha,\beta,\gamma)$. The following are equivalent.
\begin{enumerate}
	\item \label{main.beta} $\beta_i \neq 0$ for all $i \in Q_0$,
	\item \label{main.noeth} $\cH$ is right (or left) noetherian,
	\item \label{main.subalg} $\kk[u_id_i,d_{i-1}u_{i-1}]$ is a polynomial ring in two variables for all $i \in Q_0$,
	\item \label{main.pwd} $\cH$ is a piecewise domain.
\end{enumerate}
\end{theorem}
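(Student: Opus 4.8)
The plan is to prove the four conditions equivalent by establishing a cycle of implications, drawing almost entirely on results already proved in the earlier sections. The heavy lifting has been done; the remaining task is organizational. The cleanest route is to show $(1)\Rightarrow(3)$, $(1)\Rightarrow(4)$, and $(1)\Rightarrow(2)$ from the generalized Weyl algebra machinery, and then to close the loop by proving $\neg(1)\Rightarrow\neg(2)$, together with easy reverse implications to tie $(3)$ and $(4)$ back to $(1)$.

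\medskip

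First I would dispatch the implications out of condition $(1)$, all of which are already available. Assuming $\beta_i \neq 0$ for all $i \in Q_0$, Corollary~\ref{cor.gwa_props}\eqref{gwa_prop1} gives that $\cH$ is noetherian, yielding $(1)\Rightarrow(2)$; Corollary~\ref{cor.gwa_props}\eqref{gwa_prop2} gives that $\kk[u_id_i,d_{i-1}u_{i-1}]$ is a polynomial ring in two variables, yielding $(1)\Rightarrow(3)$; and Corollary~\ref{cor.pwd} gives that $\cH$ is a piecewise domain, yielding $(1)\Rightarrow(4)$. These three facts are immediate citations of the generalized Weyl algebra identification $T \iso \cH(\alpha,\beta,\gamma)$ from Proposition~\ref{prop.gwa}.

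\medskip

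Next I would supply the reverse directions needed to close the cycle. The key implication is $(2)\Rightarrow(1)$: this is exactly the contrapositive of Lemma~\ref{lem.nnoeth}, which shows that if $\beta_i = 0$ for some $i$ then $\cH$ fails to be both right and left noetherian. For $(3)\Rightarrow(1)$ and $(4)\Rightarrow(1)$, I would argue by contraposition, showing that when some $\beta_i = 0$ the corresponding subalgebra degenerates and $\cH$ acquires a zero-divisor. Concretely, the relations \eqref{eq.UDkill} from the proof of Lemma~\ref{lem.nnoeth} exhibit, when $\beta_i = 0$ and $\gamma_i = 0$, a nonzero product $(\alpha_i u_i d_i - x_{i-1})u_i = 0$ in the appropriate idempotent-sandwiched corner, directly violating the piecewise domain condition and forcing an algebraic relation between $u_id_i$ and $d_{i-1}u_{i-1}$ that prevents $\kk[u_id_i,d_{i-1}u_{i-1}]$ from being a free polynomial ring. (When $\gamma_i \neq 0$ one must be slightly more careful and track the linear correction term, but the same monomials from \eqref{eq.UDkill} govern the argument.)

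\medskip

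\textbf{The main obstacle} I expect is verifying $\neg(1)\Rightarrow\neg(3)$ and $\neg(1)\Rightarrow\neg(4)$ with full rigor, since these were not stated as standalone lemmas in the earlier sections the way $\neg(1)\Rightarrow\neg(2)$ was. For the piecewise domain failure, the relation $(\alpha_i u_id_i + \gamma_i - x_{i-1})u_i = 0$ from \eqref{eq.UDkill} lies in $e_{i-1}\cH e_{i+1}$ (after checking sources and targets via the quiver \eqref{eq.dblA}), so I must confirm that neither factor vanishes in $\cH$ using the explicit $\kk$-basis of Proposition~\ref{prop.basis} — the left factor is a nonzero combination of basis paths and $u_i$ is itself a basis path — which establishes $\neg(4)$. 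For $\neg(3)$, I would observe that this same relation, after multiplying suitably, forces a nontrivial polynomial identity among $u_id_i$ and its neighbors, so the two-generator subalgebra cannot be a free polynomial ring. Once these degenerate-case calculations are pinned down against the monomial basis, the cycle $(1)\Rightarrow(2)\Rightarrow(1)$, $(1)\Rightarrow(3)\Rightarrow(1)$, $(1)\Rightarrow(4)\Rightarrow(1)$ closes and the four conditions are equivalent.
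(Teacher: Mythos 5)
Your proposal is correct and follows essentially the same route as the paper: the forward implications are the cited Corollaries \ref{cor.gwa_props} and \ref{cor.pwd}, the noetherian converse is Lemma \ref{lem.nnoeth}, and the failure of (3) and (4) when some $\beta_i=0$ comes from the zero-divisor relation $(d_{i-1}u_{i-1}-\alpha_iu_id_i-\gamma_i)u_i=0$ and its right-multiplication by $d_i$, exactly as in the paper. The only nitpick is that this relation is a product of elements of $e_i\cH e_i$ and $e_i\cH e_{i+1}$ (not $e_{i-1}\cH e_{i+1}$), but this does not affect the argument.
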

\begin{proof}
Suppose $\beta_i \neq 0$ for all $i$. Then \eqref{main.noeth} and \eqref{main.subalg} follow from Corollary \ref{cor.gwa_props}, and \eqref{main.pwd} follows from Corollary \ref{cor.pwd}. 

Suppose $\beta_i=0$ for some $i\in Q_0$. By Lemma \ref{lem.nnoeth}, $\cH$ is not noetherian so \eqref{main.noeth} fails. Then we have 
\begin{align}
\label{eq.bad1}    (d_{i-1}u_{i-1}-\alpha_iu_id_i-\gamma_i)u_i &= 0, \\
\label{eq.bad2}    d_i(d_{i-1}u_{i-1} - \alpha_i u_id_i - \gamma_i) &= 0,
\end{align}
which shows that $\cH$ is not a piecewise domain, so \eqref{main.pwd} fails. Moreover, we have $(d_{i-1}u_{i-1}-\alpha_iu_id_i-\gamma_i)u_id_i=0$, so $u_id_i$ and $d_{i-1}u_{i-1}$ are algebraically dependent and \eqref{main.subalg} fails.
\end{proof}

\section{The isomorphism problem}
\label{sec.iso}

In \cite{GZiso}, the authors consider the isomorphism problem for quantum analogues of type A preprojective algebras. The following results are closely related because the algebras live on the same quivers.
The isomorphisms of the next lemma correspond respectively to scaling, rotation, and reflection.

\begin{lemma}\label{lem.iso}
Let $\alpha,\beta,\gamma \in \kk^n$. 

(1) Let $\lambda \in (\kk^\times)^n$. For all $i \in Q_0$, set 
$\alpha_i' = \lambda_i\lambda_{i-1}\inv\alpha_i$,
$\beta_i' = \lambda_{i+1}\lambda_{i-1}\inv\beta_i$, and 
$\gamma_i' = \lambda_{i-1}\inv\gamma_i$.
Then the linear map 
$\phi_\lambda:\cH(\alpha,\beta,\gamma)\to \cH(\alpha',\beta',\gamma')$ 
defined by
\[
    \phi_\lambda(e_i) = e_i, \qquad
    \phi_\lambda(u_i) = \lambda_i u_i, \qquad
    \phi_\lambda(d_i) = d_i,
\]
extends to an isomorphism.

(2) For all $i \in Q_0$, set  
$\alpha_i'=\alpha_{i-1}$, 
$\beta_i'=\beta_{i-1}$, and
$\gamma_i' = \gamma_{i-1}$.
Then linear map $\psi:\cH(\alpha,\beta,\gamma)\to \cH(\alpha',\beta',\gamma')$ defined by
\[
    \psi(e_i) = e_{i+1}, \qquad
    \psi(u_i) = u_{i+1}, \qquad 
    \psi(d_i) = d_{i+1},
\]
extends to an isomorphism.

(3) Suppose $\beta_i\neq 0$ for all $i \in Q_0$.
For all $i \in Q_0$, set 
$\alpha_i'=-\beta_{n-i-1}\inv\alpha_{n-i-1}$, 
$\beta_i'=\beta_{n-i-1}\inv$, and
$\gamma_i' = -\beta_{n-i-1}\inv\gamma_{n-i-1}$.
Then the linear map $\pi:\cH(\alpha,\beta,\gamma)\to \cH(\alpha',\beta',\gamma')$ defined by
\[
    \pi(e_i) = e_{n-i}, \qquad
    \pi(u_i) = d_{n-i-1}, \qquad 
    \pi(d_i) = u_{n-i-1},
\]
extends to an isomorphism.
\end{lemma}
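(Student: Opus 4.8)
The plan is to handle all three maps in the same way: check that the assignment is compatible with the source and target maps of the doubled quiver, so that it defines a homomorphism $\kk Q \to \kk Q$; verify that it sends each of the two defining relations of $\cH(\alpha,\beta,\gamma)$ into the relation ideal of the target $\cH(\alpha',\beta',\gamma')$ (which is exactly what is needed for the map to descend to the quotients, since those are the only relations imposed); and then exhibit a two-sided inverse of the same form.

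For part (1) I would apply $\phi_\lambda$ to $d_{i-1}u_{i-1}u_i = \alpha_i u_id_iu_i + \beta_i u_iu_{i+1}d_{i+1} + \gamma_i u_i$ and track the scalar accrued by each monomial from the $u$'s it contains: the left-hand side gains $\lambda_{i-1}\lambda_i$, the $\alpha$-term $\lambda_i^2$, the $\beta$-term $\lambda_i\lambda_{i+1}$, and the $\gamma$-term $\lambda_i$. Dividing through by $\lambda_{i-1}\lambda_i$ and comparing with the target relation yields exactly the stated $\alpha_i',\beta_i',\gamma_i'$; the second relation works out identically. Since $\phi_{\lambda\inv}$ undoes the parameter change, $\phi_\lambda$ is an isomorphism. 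Part (2) is pure relabeling: $\psi$ raises every index by one and so carries the relation at $i$ to the relation at $i+1$ with $\alpha,\beta,\gamma$ shifted accordingly, the downward shift being its inverse.

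The genuinely interesting case is the reflection in part (3), and this is where I expect the real work to be. First I would record the incidence data $u_i\colon e_i\to e_{i+1}$ and $d_i\colon e_{i+1}\to e_i$ (read off from the generalized Weyl algebra identities $X_i^-X_i^+=x_i\in e_iRe_i$ and $X_i^+X_i^-=y_{i+1}$) and confirm that, with $e_i\mapsto e_{n-i}$, the assignment $u_i\mapsto d_{n-i-1}$, $d_i\mapsto u_{n-i-1}$ respects sources and targets. The crucial step is to apply $\pi$ to the first relation at $i$; writing $k=n-i-1$ it becomes $u_{k+1}d_{k+1}d_k - \alpha_i d_ku_kd_k - \beta_i d_kd_{k-1}u_{k-1} - \gamma_i d_k = 0$. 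Because $\beta_i\neq 0$, I can solve for the leading monomial $d_kd_{k-1}u_{k-1}$ and recognize the outcome as the second relation at index $k$ in the target, provided $\alpha_k'=-\beta_i\inv\alpha_i$, $\beta_k'=\beta_i\inv$, and $\gamma_k'=-\beta_i\inv\gamma_i$; substituting $i=n-k-1$ reproduces the stated formulas. By the symmetric computation the second relation of the source maps into the first relation of the target.

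The main obstacle is precisely this reorganization in part (3): the reflection interchanges the two families of relations rather than preserving them, and it is only defined when $\beta$ is invertible, so one must simultaneously keep the index reversal straight and use $\beta_i\neq 0$ to move the leading term across. To close, I would observe that $\pi^2$ is the identity on generators (for instance $\pi^2(u_i)=\pi(d_{n-i-1})=u_i$, and similarly for $d_i$ and $e_i$) and that applying the parameter rule twice returns $(\alpha,\beta,\gamma)$; hence the analogous reflection out of $\cH(\alpha',\beta',\gamma')$ is a two-sided inverse, and $\pi$ is an isomorphism.
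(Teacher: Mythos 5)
Your proposal is correct and follows essentially the same route as the paper: a direct check that each map respects sources and targets, sends the two defining relations at index $i$ to scalar multiples of the target relations (with the relation families swapped and the leading term divided across by $\beta_i\neq 0$ in the reflection case), together with the observation that an inverse of the same form exists. The scalar bookkeeping in (1) and the identification of the image of the first relation with $-\beta_{n-i-1}$ times the second target relation in (3) match the paper's computations exactly.
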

\begin{proof}
The maps are all clearly bijective as linear maps. It suffices to show that each respects the relations on $\cH(\alpha,\beta,\gamma)$. For $\phi_\alpha$:
\begin{align*}
\phi_\lambda(d_{i-1})\phi_\lambda(u_{i-1})\phi_\lambda(u_i) &- \alpha_i \phi_\lambda(u_i)\phi_\lambda(d_i)\phi_\lambda(u_i) - \beta_i \phi_\lambda(u_i)\phi_\lambda(u_{i+1})\phi_\lambda(d_{i+1}) - \gamma_i\phi_\lambda(u_i) \\
	&= \lambda_i(\lambda_{i-1} d_{i-1}u_{i-1}u_i - \lambda_i\alpha_i u_id_iu_i - \lambda_{i+1}\beta_i u_iu_{i+1}d_{i+1} - \gamma_i u_i) \\
	&= \lambda_i\lambda_{i-1} (d_{i-1}u_{i-1}u_i - \alpha_i' u_id_iu_i - \beta_i' u_iu_{i+1}d_{i+1} - \gamma_i' u_i) = 0 \\
\phi_\lambda(d_i)\phi_\lambda(d_{i-1})\phi_\lambda(u_{i-1}) &- \alpha_i \phi_\lambda(d_i)\phi_\lambda(u_i)\phi_\lambda(d_i) - \beta_i \phi_\lambda(u_{i+1})\phi_\lambda(d_{i+1})\phi_\lambda(d_i) - \gamma_i \phi_\lambda(d_i) \\
    &= \lambda_{i-1} d_id_{i-1}u_{i-1} - \lambda_i\alpha_i d_iu_id_i - \lambda_{i+1}\beta_i u_{i+1}d_{i+1}d_i - \gamma_i d_i \\
    &= \lambda_{i-1} (d_id_{i-1}u_{i-1} - \alpha_i' d_iu_id_i - \beta_i' u_{i+1}d_{i+1}d_i - \gamma_i' d_i) = 0.
\end{align*}
For $\psi$:
\begin{align*}
\psi(d_{i-1})\psi(u_{i-1})\psi(u_i) &- \alpha_i \psi(u_i)\psi(d_i)\psi(u_i) - \beta_i \psi(u_i)\psi(u_{i+1})\psi(d_{i+1}) - \gamma_i\psi(u_i) \\
    &= d_iu_iu_{i+1} - \alpha_{i+1}' u_{i+1}d_{i+1}u_{i+1} - \beta_{i+1}' u_{i+1}u_{i+2}d_{i+2} - \gamma_{i+1}' u_{i+1} = 0
     \\
\psi(d_i)\psi(d_{i-1})\psi(u_{i-1}) &- \alpha_i \psi(d_i)\psi(u_i)\psi(d_i) - \beta_i \psi(u_{i+1})\psi(d_{i+1})\psi(d_i) - \gamma_i \psi(d_i) \\
    &= d_{i+1}d_iu_i - \alpha_{i+1}' d_{i+1}u_{i+1}d_{i+1} - \beta_{i+1}' u_{i+2}d_{i+2}d_{i+1} - \gamma_{i+1}' d_{i+1} = 0.
\end{align*}

Finally, for $\pi$:
\begin{align*}
\pi(d_{i-1})&\pi(u_{i-1})\pi(u_i) - \alpha_i \pi(u_i)\pi(d_i)\pi(u_i) - \beta_i \pi(u_i)\pi(u_{i+1})\pi(d_{i+1}) - \gamma_i\pi(u_i) \\
    &= u_{n-i}d_{n-i}d_{n-i-1} - \alpha_i d_{n-i-1}u_{n-i-1}d_{n-i-1} - \beta_i d_{n-i-1}d_{n-i-2}u_{n-i-2} - \gamma_i d_{n-i-1} \\
    &= -\beta_i (d_{n-i-1}d_{n-i-2}u_{n-i-2} - \alpha_{n-i-1}' d_{n-i-1}u_{n-i-1}d_{n-i-1} \\
    &\qquad - \beta_{n-i-1}' u_{n-i}d_{n-i}d_{n-i-1}  - \gamma_{n-i-1}' d_{n-i-1}) = 0
     \\
\pi(d_i)&\pi(d_{i-1})\pi(u_{i-1}) - \alpha_i \pi(d_i)\pi(u_i)\pi(d_i) - \beta_i \pi(u_{i+1})\pi(d_{i+1})\pi(d_i) - \gamma_i \pi(d_i) \\
    &= u_{n-i-1}u_{n-i}d_{n-i} - \alpha_i u_{n-i-1}d_{n-i-1}u_{n-i-1} - \beta_i d_{n-i-2}u_{n-i-2}u_{n-i-1} - \gamma_i u_{n-i-1} \\
    &= - \beta_i (d_{n-i-2}u_{n-i-2}u_{n-i-1} - \alpha_{n-i-1}' u_{n-i-1}d_{n-i-1}u_{n-i-1} \\
    &\qquad - \beta_{n-i-1}' u_{n-i-1}u_{n-i}d_{n-i}  - \gamma_{n-i-1}' u_{n-i-1}) = 0.
\end{align*}
Hence, each linear map extends to an isomorphism.
\end{proof}

In general, we do not know the answer to the isomorphism problem for the quiver down-up algebras. However, in the graded case with $n\geq 3$ vertices, we obtain a complete classification of isomorphism classes.

\begin{theorem}\label{thm.iso}
Fix $n \geq 3$. Let $\alpha,\alpha' \in \kk^n$ and $\beta,\beta' \in (\kk^\times)^n$. Then $\cH(\alpha,\beta,0) \iso \cH(\alpha',\beta',0)$ if and only if there exists $\lambda\in(\kk^\times)^n$ and $k \in Q_0$ such that one of the following hold:
\begin{enumerate}
	\item \label{iso1} $\alpha_i' = \lambda_{i+k}\lambda_{i+k-1}\inv \alpha_{i+k}$,\quad
	$\beta_i' = \lambda_{i+k+1} \lambda_{i+k-1}\inv \beta_{i+k}$, or

	\item \label{iso2} 
	$\alpha_i' = -\lambda_{n-i-k}\inv\lambda_{n-i-k-1} \beta_{n-i-k-1}\inv\alpha_{n-i-k-1}$, \quad
	$\beta_i' = \lambda_{n-i-k}\inv\lambda_{n-i-k-2}\inv \beta_{n-i-k-1}\inv$.
\end{enumerate}
\end{theorem}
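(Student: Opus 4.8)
The plan is to prove the ``if'' direction by realizing the required isomorphisms as composites of the three families in Lemma~\ref{lem.iso}, and to prove the ``only if'' direction by first reducing to graded isomorphisms and then classifying these via the rigidity of the quiver $Q$. For ``if'', condition~\eqref{iso1} is realized by a composite $\phi_\lambda\circ\psi^{k}$ of a scaling with $k$ rotations, while condition~\eqref{iso2} is realized by $\phi_\lambda\circ\psi^{k}\circ\pi$, which additionally inserts the reflection. Iterating the parameter rules of Lemma~\ref{lem.iso} and matching indices (with $k$ read modulo $n$) recovers the stated formulas; this is bookkeeping, the only subtlety being the sign of $k$, which is immaterial since $k$ ranges over all of $Q_0$.

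For ``only if'', I would first reduce to the graded case. Because $\gamma=0$, the defining relations are homogeneous, so $I,I'\subset\kk Q_{\geq 2}$ are homogeneous, the Jacobson radical of $\cH=\cH(\alpha,\beta,0)$ is the arrow ideal $\cH_{\geq 1}$, and $\gr\cH\iso\cH$ with respect to the radical filtration. Any algebra isomorphism preserves the radical and hence induces a graded isomorphism on associated graded algebras, so $\cH(\alpha,\beta,0)\iso\cH(\alpha',\beta',0)$ if and only if there is a graded isomorphism, and I may assume $f$ is graded.

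A graded isomorphism $f$ restricts to an isomorphism $\cH_0\to\cH_0'$, that is $\kk^n\to\kk^n$, hence permutes the primitive idempotents: $f(e_i)=e_{\tau(i)}$ for a permutation $\tau$ of $Q_0$. Since $f$ carries $e_i\cH_1 e_j$ isomorphically onto $e_{\tau(i)}\cH_1' e_{\tau(j)}$, the permutation $\tau$ is an automorphism of the \emph{directed} quiver $Q$. For $n\geq 3$ the doubled cycle $Q$ has no loops and exactly one arrow in each direction between adjacent vertices, so its directed automorphism group is the dihedral group $D_n$: rotations preserve the global orientation, hence send $u$-arrows to $u$-arrows and $d$-arrows to $d$-arrows, while reflections reverse orientation, hence interchange the two types. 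Composing $f$ with a suitable $\psi^{-k}$ (and with $\pi$ in the reflection case) reduces to $\tau=\id$ with $f$ preserving arrow type. As each space $e_a\cH_1 e_b$ is at most one-dimensional, $f$ is then diagonal, $f(u_i)=a_i u_i$ and $f(d_i)=b_i d_i$ with $a_i,b_i\in\kk^\times$. Imposing that $f$ carry the relations of $\cH$ into $I'$, equivalently comparing coefficients of the basis monomials $u_id_iu_i$ and $u_iu_{i+1}d_{i+1}$ from Proposition~\ref{prop.basis}, gives $\alpha_i'=a_ib_i(a_{i-1}b_{i-1})\inv\alpha_i$ and $\beta_i'=a_{i+1}b_{i+1}(a_{i-1}b_{i-1})\inv\beta_i$. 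Only the products $\lambda_i:=a_ib_i$ survive, and these are precisely the relations in~\eqref{iso1} with $k=0$; reinstating the composition with $\psi^{k}$ shifts indices by $k$ to yield~\eqref{iso1} in general, and the reflection case runs identically after composing with $\pi$ to produce~\eqref{iso2}.

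The main obstacle I anticipate is twofold. First, one must rigorously establish that the directed automorphism group of $Q$ is exactly $D_n$ for $n\geq 3$, so that no exotic idempotent permutations arise and the $u$- and $d$-arrows are distinguished precisely by orientation; this is where the hypothesis $n\geq 3$ is genuinely used, the cases $n=1,2$ being degenerate. Second, the index bookkeeping in the reflection case is delicate, since $\pi$ together with a rotation produces the shifted indices $n-i-k$ appearing in~\eqref{iso2}, and the composition of the parameter rules must be tracked carefully. The algebraic core—comparing coefficients against the monomial basis—is routine once the quiver-automorphism step is in place.
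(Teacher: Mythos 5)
Your overall strategy coincides with the paper's: the ``if'' direction by composing the three maps of Lemma \ref{lem.iso}, and the ``only if'' direction by reducing to a graded isomorphism, observing that a graded isomorphism permutes the vertices by an element of the dihedral group (using that $Q$ is schurian for $n\geq 3$), normalizing by $\psi^k$ and $\pi$ to the vertex-fixing case, and comparing coefficients of the relations to extract the scalars $\lambda_i=\mu_i\nu_i$. The index bookkeeping you defer is exactly what the paper carries out explicitly.

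There is, however, a genuine gap in your reduction to the graded case. You assert that the Jacobson radical of $\cH=\cH(\alpha,\beta,0)$ is the arrow ideal $\cH_{\geq 1}$, so that any isomorphism preserves it and induces a graded isomorphism on associated graded algebras. This is false for the ordinary Jacobson radical: $\cH$ is an infinite-dimensional affine algebra (a noetherian piecewise domain when $\beta\in(\kk^\times)^n$, by Theorem \ref{thm.main}), and its Jacobson radical is not the arrow ideal --- already for $n=1$ the graded down-up algebra is a noetherian domain with $J=0$, while the arrow ideal is a proper nonzero ideal. The arrow ideal is only the \emph{graded} Jacobson radical, and that is not visibly preserved by an ungraded isomorphism, so your radical-filtration argument does not get off the ground. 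The passage from an arbitrary isomorphism to a graded one is a nontrivial step here; the paper invokes \cite[Theorem 8]{Gpath} for exactly this purpose, and your proof needs that citation (or a genuine substitute, e.g.\ an intrinsically characterized filtration, which the radical filtration is not in this setting) to be complete. Once that step is repaired, the remainder of your argument matches the paper's.
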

\begin{proof}
If \eqref{iso1} holds for some $\lambda_i \in \kk^\times$, then by Lemma \ref{lem.iso}, $\psi^{-k} \circ \phi_\lambda$ gives the intended isomorphism. If \eqref{iso2} holds for some $\lambda_i \in \kk^\times$, then by Lemma \ref{lem.iso}, $\psi^{-k} \circ \pi \circ \phi_\lambda$ gives the intended isomorphism. 

Set $\cH=\cH(\alpha,\beta,0)$ and $\cH'=\cH(\alpha',\beta',0)$.
Assume $\Phi:\cH \to \cH'$ is an isomorphism. 
By \cite[Theorem 8]{Gpath}, we may assume that $\Phi$ is a graded isomorphism.
Then $\Phi$ restricts to a permutation of the set of vertices $\{e_i\}$. Since $u_i$ is an arrow from $e_i$ to $e_{i+1}$, then this permutation preserves adjacency. Hence, $\Phi$ may be regarded as an element of the dihedral group on $n$ vertices.

First, suppose that $\Phi(e_i)=e_i$ for all $i$. As we have assumed $n \geq 3$, then $Q$, as in \eqref{eq.dblA}, is schurian. 
That is, for $i \neq j$, there is at most one arrow $a$ with $s(a)=i$ and $t(a)=j$.
Thus, there are scalars $\mu_i,\nu_i \in \kk^\times$ such that $\Phi(u_i)=\mu_i u_i$ and $\Phi(d_i)=\nu_i d_i$ for all $i \in Q_0$. Then
\begin{align*}
0 &= \Phi(d_{i-1}u_{i-1}u_i - \alpha_i u_id_iu_i - \beta_i u_iu_{i+1}d_{i+1}) \\
	&= \mu_i (\mu_{i-1}\nu_{i-1} d_{i-1}u_{i-1}u_i - \alpha_i \mu_i\nu_i u_id_iu_i - \beta_i \mu_{i+1}\nu_{i+1} u_iu_{i+1}d_{i+1}) \\	
	&= \mu_i\mu_{i-1}\nu_{i-1} (d_{i-1}u_{i-1}u_i - \alpha_i (\mu_i\nu_i)(\mu_{i-1}\nu_{i-1})\inv u_id_iu_i - \beta_i (\mu_{i+1}\nu_{i+1})(\mu_{i-1}\nu_{i-1})\inv u_iu_{i+1}d_{i+1}) \\	
0 &= \Phi(d_id_{i-1}u_{i-1} - \alpha_i d_iu_id_i - \beta_i u_{i+1}d_{i+1}d_i ) \\
	&= \nu_i (\mu_{i-1}\nu_{i-1} d_id_{i-1}u_{i-1} - \mu_i\nu_i \alpha_i d_iu_id_i - \beta_i \mu_{i+1}\nu_{i+1} u_{i+1}d_{i+1}d_i) \\
	&= \nu_i (d_id_{i-1}u_{i-1} - (\mu_i\nu_i)(\mu_{i-1}\nu_{i-1})\inv \alpha_i d_iu_id_i - \beta_i (\mu_{i+1}\nu_{i+1})(\mu_{i-1}\nu_{i-1})\inv u_{i+1}d_{i+1}d_i).
\end{align*}
Setting $\lambda_i = \mu_i\nu_i$ gives (1) for $k=0$.

Now assume that $\Phi$ corresponds to a (nontrivial) rotation, so $\Phi(e_i)=e_{i-k}$ for some $k$, $0 < k < n$. Set $\alpha_i'' = \alpha_{i-k}'$ and $\beta_i'' = \beta_{i-k}'$ and let $\cH''=\cH(\alpha'',\beta'',0)$.
Thus, $\psi^k:\cH' \to \cH''$ is an isomorphism. Then $\psi^k \circ \Phi:\cH \to \cH''$ is an isomorphism that fixes the vertices. Consequently, there are scalars $\lambda_i \in \kk^\times$ such that $\alpha_i'' = \lambda_i\lambda_{i-1}\inv \alpha_i$ and $\beta_i'' = \lambda_{i+1}\lambda_{i-1}\inv \beta_i$. Thus,
\begin{align*}
    \alpha_{i-k}' = \lambda_i\lambda_{i-1}\inv \alpha_i 
        &\Rightarrow \alpha_i' = \lambda_{i+k}\lambda_{i+k-1}\inv \alpha_{i+k} \\
    \beta_{i-k}' = \lambda_{i+1}\lambda_{i-1}\inv \beta_i
        &\Rightarrow \beta_i' = \lambda_{i+k+1}\lambda_{i+k-1}\inv \beta_{i+k}.
\end{align*}

Finally, assume $\Phi$ corresponds to a reflection, so $\Phi(e_i)=e_{n-i-k}$ for some $k \in Q_0$. Define $\cH'' = \cH(\alpha'',\beta'',0)$ as above with $\alpha_i'' = \alpha_{i-k}'$ and $\beta_i'' = \beta_{i-k}'$, so that $\psi^k:\cH' \to \cH''$ is an isomorphism. Set $\alpha_i'''=-(\beta_{n-i-1}'')\inv(\alpha_{n-i-1}'')$ and $\beta_i'''=(\beta_{n-i-1}'')\inv$ and let $\cH'''=\cH(\alpha''',\beta''',0)$ so that $\pi:\cH'' \to \cH'''$ is an isomorphism.
Then $\pi \circ \psi^k \circ \Phi$ is an isomorphism that fixes the vertices, so there exists scalars $\lambda_i \in \kk^\times$ such that $\alpha_i''' = \lambda_i\lambda_{i-1}\inv \alpha_i$ and $\beta_i''' = \lambda_{i+1}\lambda_{i-1}\inv \beta_i$. Unwinding, we have
\begin{align*}
    \beta_i' &= \beta_{i+k}'' 
        = (\beta_{n-i-k-1}''')\inv
        = \lambda_{n-i-k}\inv \lambda_{n-i-k-2}\beta_{n-i-k-1}\inv \\
    \alpha_i' &= \alpha_{i+k}''
        = -(\beta_{n-i-k-1}''')\inv(\alpha_{n-i-k-1}''') \\
        &= -\left( \lambda_{n-i-k}\inv \lambda_{n-i-k-2} \beta_{n-i-k-1}\inv\right)\left( \lambda_{n-i-k-1}\lambda_{n-i-k-2} \alpha_{n-i-k-1}\right) \\
        &= -\lambda_{n-i-k}\inv \lambda_{n-i-k-1}\beta_{n-i-k-1}\inv\alpha_{n-i-k-1}.
\end{align*}
This gives the result.
\end{proof}

The isomorphism problem for $n=1$ is known (see \cite[Proposition 3.14]{Ghberg} translated to down-up algebras). We anticipate that Theorem \ref{thm.iso} holds in the case $n=2$. However, because the quiver in this case is not schurian, then more work is required. See \cite[Proposition 3.12]{GZiso}. 


\end{document}